\newtheorem{thm}{Theorem}[section]
\newtheorem{prop}[thm]{Proposition}
\newtheorem{lem}[thm]{Lemma}
\newtheorem{cor}[thm]{Corollary}
\theoremstyle{definition}
\newtheorem{defn}[thm]{Definition}
\newtheorem{notn}[thm]{Notation}
\newtheorem{assmp}[thm]{Assumption}
\renewcommand{\v}{\bm{v}}
\newcommand{\w}{\bm{w}}
\renewcommand{\u}{\bm{u}}
\newcommand{\hypergeometricseries}[5]{ {}_{#1} F_{#2} \biggl( \!\! \begin{array}{c} #3 \\ #4 \end{array} \!\!\! \biggm| #5 \biggr)}
\newcommand{\basichypergeometricseries}[5]{ {}_{#1} \phi_{#2} \biggl( \!\! \begin{array}{c} #3 \\ #4 \end{array} \!\!\!\biggm| #5 \biggr)}
\begin{document}

\title{The Erd\H{o}s--Ko--Rado basis for a Leonard system}

\author{Hajime Tanaka}
\address{Research Center for Pure and Applied Mathematics, Graduate School of Information Sciences, Tohoku University, 6-3-09 Aramaki-Aza-Aoba, Aoba-ku, Sendai 980-8579, Japan}
\email{htanaka@m.tohoku.ac.jp}

\subjclass[2010]{05D05, 05E30, 33C45, 33D45} 
\keywords{Leonard system; Erd\H{o}s--Ko--Rado theorem; Distance-regular graph}

\thanks{Supported in part by JSPS Grant-in-Aid for Scientific Research No.~23740002.}

\begin{abstract}
We introduce and discuss an \emph{Erd\H{o}s--Ko--Rado basis} of the vector space underlying a Leonard system $\Phi \,{=}\, \bigl( A ; A^* ; \{ E_i \}_{ i=0 }^d ; \{ E_i^* \}_{ i=0 }^d \bigr)$ that satisfies a mild condition on the eigenvalues of $A$ and $A^*$.
We describe the transition matrices to/from other known bases, as well as the matrices representing $A$ and $A^*$ with respect to the new basis.
We also discuss how these results can be viewed as a generalization of the linear programming method used previously in the proofs of the ``Erd\H{o}s--Ko--Rado theorems'' for several classical families of $Q$-polynomial distance-regular graphs, including the original 1961 theorem of Erd\H{o}s, Ko, and Rado.
\end{abstract}

\maketitle

\section{Introduction}

\emph{Leonard systems} \cite{Terwilliger2001LAA} naturally arise in representation theory, combinatorics, and the theory of orthogonal polynomials (see e.g. \cite{Terwilliger2003JCAM,Terwilliger2006N}).
Hence they are receiving considerable attention.
Indeed, the use of the name ``Leonard system'' is motivated by a connection to a theorem of Leonard \cite{Leonard1982SIAM}, \cite[pp.~263--274]{BI1984B}, which involves the $q$-Racah polynomials \cite{AW1979SIAM} and some related polynomials of the Askey scheme \cite{KS1998R}.
Leonard systems also play a role in coding theory; see \cite{KLM2010P}.

Let $\Phi = \bigl( A ; A^* ; \{ E_i \}_{ i=0 }^d ; \{ E_i^* \}_{ i=0 }^d \bigr)$ be a Leonard system over a field $\mathbb{K}$, and $V$ the vector space underlying $\Phi$ (see Section \ref{sec: Leonard systems} for formal definitions).
Then $V = \bigoplus_{ i=0 }^d E_i^* V$ and $\dim E_i^* V = 1$ ($0 \leqslant i \leqslant d$).
We have a ``canonical'' (ordered) basis of $V$ associated with this direct sum decomposition, called a \emph{standard basis}.
There are 8 variations for the standard basis.
Next, let $U_{ \ell } = \bigl( \sum_{ i=0 }^{ \ell } E_i^* V \bigr) \cap \bigl( \sum_{ j=\ell }^d E_j V \bigr)$ ($0 \leqslant \ell \leqslant d$).
Then, again it follows that $V = \bigoplus_{ \ell=0 }^d U_{ \ell }$ and $\dim U_{ \ell } = 1$ ($0 \leqslant \ell \leqslant d$).
We have a ``canonical'' basis of $V$ associated with this \emph{split decomposition}, called a \emph{split basis}.
The split decomposition is crucial in the theory of Leonard systems,\footnote{In some cases, $V$ has the structure of an evaluation module of the quantum affine algebra $U_q ( \widehat{ \mathfrak{sl} }_2 )$, and the split decomposition corresponds to its weight space decomposition; see e.g. \cite{IT2009KJM}.} and there are 16 variations for the split basis.
Altogether, Terwilliger \cite{Terwilliger2002RMJM} defined $24$ bases of $V$ and studied in detail the transition matrices between these bases as well as the matrices representing $A$ and $A^*$ with respect to them.

In the present paper, we introduce another basis of $V$, which we call an \emph{Erd\H{o}s--Ko--Rado} (or \emph{EKR}) \emph{basis} of $V$, under a mild condition on the eigenvalues of $A$ and $A^*$ (see below).
As its name suggests, this basis arises in connection with the famous \emph{Erd\H{o}s--Ko--Rado theorem} \cite{EKR1961QJMO} in extremal set theory.
Indeed, Delsarte's \emph{linear programming method} \cite{Delsarte1973PRRS}, which is closely related to Lov\'{a}sz's $\vartheta$-function bound \cite{Lovasz1979IEEE,Schrijver1979IEEE} on the Shannon capacity of graphs, has been successfully used in the proofs of the ``Erd\H{o}s--Ko--Rado theorems'' for certain families of $Q$-\emph{polynomial distance-regular graphs}\footnote{$Q$-polynomial distance-regular graphs are thought of as finite/combinatorial analogues of compact symmetric spaces of rank one; see \cite[pp.~311--312]{BI1984B}.} \cite{Wilson1984C,FW1986JCTA,Tanaka2006JCTA,Tanaka2010pre} (including the original 1961 theorem of Erd\H{o}s et al.), and constructing appropriate feasible solutions to the dual programs amounts to describing the EKR bases for the Leonard systems associated with these graphs; see Section \ref{sec: applications}.
It seems that the previous constructions of the feasible solutions depend on the geometric/algebraic structures which are more or less specific to the family of graphs in question.
Our results give a uniform description of such feasible solutions in terms of the \emph{parameter arrays} of Leonard systems.

The contents of the paper are as follows.
Section \ref{sec: Leonard systems} reviews basic terminology, notation and facts concerning Leonard systems.
In Section \ref{sec: EKR basis}, we first study the subspaces $W_t = \bigl( E_0^* V + \sum_{ i=d-t+1 }^d E_i^* V \bigr) \cap \bigl( E_0 V + \sum_{ j=t+1 }^d E_j V \bigr)$ ($0 \leqslant t \leqslant d$).
We show that $\dim W_t = 1$ $( 0 \leqslant t \leqslant d )$, and that $V = \bigoplus_{ t=0 }^d W_t$ if and only if $q \ne -1$, or $q = -1$ and $d$ is even, where $q$ denotes a \emph{base} of $\Phi$ (which is determined by the recurrence satisfied by the eigenvalues of $A$ and $A^*$).
Assuming that this is the case, we then define an EKR basis associated with this direct sum decomposition.
We describe the transition matrices to/from $3$ bases out of the $24$ bases mentioned above ($2$ standard, $1$ split), as well as the matrices representing $A$ and $A^*$ with respect to the EKR basis.
Our main results are Theorems \ref{transition matrices to EKR basis}, \ref{transition matrices from EKR basis}, and \ref{A, A* in terms of EKR basis}.
Section \ref{sec: applications} is devoted to discussions of the connections and applications of these results to the Erd\H{o}s--Ko--Rado theorems.

\section{Leonard systems}\label{sec: Leonard systems}

Let $\mathbb{K}$ be a field, $d$ a positive integer, $\mathscr{A}$ a $\mathbb{K}$-algebra isomorphic to the full matrix algebra $\mathrm{Mat}_{ d+1 } ( \mathbb{K} )$, and $V$ an irreducible left $\mathscr{A}$-module.
We remark that $V$ is unique up to isomorphism, and that $V$ has dimension $d+1$.
An element $A$ of $\mathscr{A}$ is said to be \emph{multiplicity-free} if it has $d+1$ mutually distinct eigenvalues in $\mathbb{K}$.
Let $A$ be a multiplicity-free element of $\mathscr{A}$ and $\{ \theta_i \}_{ i=0 }^d$ an ordering of the eigenvalues of $A$.
Let $E_i : V \rightarrow V ( \theta_i )$ $( 0 \leqslant i \leqslant d )$ be the projection map onto $V ( \theta_i )$ with respect to $V = \bigoplus_{ i=0 }^d V ( \theta_i )$, where $V ( \theta_i ) = \{ \u \in V : A \u = \theta_i \u \}$.
We call $E_i$ the \emph{primitive idempotent} of $A$ associated with $\theta_i$.
Notice that the $E_i$ are polynomials in $A$.

A \emph{Leonard system} in $\mathscr{A}$ (\cite[Definition 1.4]{Terwilliger2001LAA}) is a sequence
\begin{equation}\label{Leonard system}
	\Phi = \bigl( A ; A^* ; \{ E_i \}_{ i=0 }^d ; \{ E_i^* \}_{ i=0 }^d \bigr)
\end{equation}
satisfying the following axioms (LS1)--(LS5):
\begin{enumerate}[(LS1)]
\item Each of $A,A^*$ is a multiplicity-free element in $\mathscr{A}$.\footnote{It is customary that $A^*$ denotes the conjugate transpose of $A$. It should be stressed that we are \emph{not} using this convention.}
\item $\{ E_i \}_{ i=0 }^d$ is an ordering of the primitive idempotents of $A$.
\item $\{ E_i^* \}_{ i=0 }^d$ is an ordering of the primitive idempotents of $A^*$.
\item $E_i^* A E_j^* = \begin{cases} 0 & \text{if } | i-j | > 1 \\ \ne 0 & \text{if } | i-j | = 1 \end{cases} \quad ( 0 \leqslant i , j \leqslant d )$.
\item $E_i A^* E_j = \begin{cases} 0 & \text{if } | i-j | > 1 \\ \ne 0 & \text{if } | i-j | = 1 \end{cases} \quad ( 0 \leqslant i , j \leqslant d )$.
\end{enumerate}
We say that $\Phi$ is \emph{over} $\mathbb{K}$.
We refer the reader to \cite{Terwilliger2001LAA,Terwilliger2004LAA,Terwilliger2006N} for background on Leonard systems.

Throughout the paper, $\Phi = \bigl( A ; A^* ; \{ E_i \}_{ i=0 }^d ; \{ E_i^* \}_{ i=0 }^d \bigr)$ shall always denote the Leonard system \eqref{Leonard system}.
Notice that the following are Leonard systems:
\begin{align*}
	\Phi^* &= \bigl( A^* ; A ; \{ E_i^* \}_{ i=0 }^d ; \{ E_i \}_{ i=0 }^d \bigr), \\
	\Phi^{ \downarrow } &= \bigl( A ; A^* ; \{ E_i \}_{ i=0 }^d ; \{ E_{ d-i }^* \}_{ i=0 }^d \bigr), \\
	\Phi^{ \Downarrow } &= \bigl( A ; A^* ; \{ E_{ d-i } \}_{ i=0 }^d ; \{ E_i^* \}_{ i=0 }^d \bigr).
\end{align*}
Viewing $*, \downarrow, \Downarrow$ as permutations on all Leonard systems,
\begin{equation*}
	*^2 = \downarrow^2 = \Downarrow^2 = 1, \quad \Downarrow * = * \downarrow, \quad \downarrow * = * \Downarrow, \quad \downarrow \Downarrow = \Downarrow \downarrow.
\end{equation*}
The group generated by the symbols $*, \downarrow, \Downarrow$ subject to the above relations is the dihedral group $D_4$ with $8$ elements.
We shall use the following notational convention:

\begin{notn}
For any $g \in D_4$ and for any object $f$ associated with $\Phi$, we let $f^g$ denote the corresponding object for $\Phi^{ g^{ -1 } }$; an example is $E_i^* ( \Phi ) = E_i ( \Phi^* )$.
\end{notn}

It is known (\cite[Theorem 6.1]{Terwilliger2004LAA}) that there is a unique antiautomorphism $\dag$ of $\mathscr{A}$ such that $A^{ \dag } = A$ and $A^{ * \dag } = A^*$.
From now on, let $\langle \cdot, \cdot \rangle : V \times V \rightarrow \mathbb{K}$ be a nondegenerate bilinear form on $V$ such that (\cite[Section 15]{Terwilliger2004LAA})
\begin{equation*}
	\langle X \u_1 , \u_2 \rangle = \langle \u_1 , X^{ \dag } \u_2 \rangle \quad ( \u_1 , \u_2 \in V, \ X \in \mathscr{A} ).
\end{equation*}
We shall write
\begin{equation*}
	|| \u ||^2 = \langle \u , \u \rangle \quad ( \u \in V).
\end{equation*}

\begin{notn}
Henceforth we fix a nonzero vector $\v^g$ in $E_0^g V$ for each $g \in D_4$.
We abbreviate $\v = \v^1$ where $1$ is the identity of $D_4$.
For convenience, we also assume $\v^{ g_1 } = \v^{ g_2 }$ whenever $E_0^{ g_1 } V = E_0^{ g_2 } V$ ($g_1, g_2 \in D_4$).
We remark that $|| \v^g ||^2$, $\langle \v^g , \v^{ * g } \rangle$ are nonzero for any $g \in D_4$; cf.~\cite[Lemma 15.5]{Terwilliger2004LAA}.
\end{notn}

We now recall a few direct sum decompositions of $V$, as well as (ordered) bases of $V$ associated with them.
First, $\dim E_i^* V = 1$ $( 0 \leqslant i \leqslant d )$ and $V = \bigoplus_{ i=0 }^d E_i^* V$.
By \cite[Lemma 10.2]{Terwilliger2004LAA}, $E_i^* \v \ne 0$ $( 0 \leqslant i \leqslant d )$, so that $\{ E_i^* \v \}_{ i=0 }^d$ is a basis of $V$, called a $\Phi$-\emph{standard basis} of $V$.
Next, let $U_{ \ell } = \bigl( \sum_{ i=0 }^{ \ell } E_i^* V \bigr) \cap \bigl( \sum_{ j=\ell }^d E_j V \bigr)$ ($0 \leqslant \ell \leqslant d$).
Then, again $\dim U_{ \ell } = 1$ $( 0 \leqslant \ell \leqslant d )$ and $V = \bigoplus_{ \ell=0 }^d U_{ \ell }$, which is referred to as the $\Phi$-\emph{split decomposition} of $V$ \cite{Terwilliger2006N}.
We observe $U_0 = E_0^* V$ and $U_d = E_d V$.
For $0 \leqslant i \leqslant d$, let $\theta_i$ be the eigenvalue of $A$ associated with $E_i$.
Then it follows that $( A - \theta_{ \ell } I ) U_{ \ell } = U_{ \ell+1 }$ and $( A^* - \theta_{ \ell }^* I ) U_{ \ell } = U_{ \ell-1 }$ for $0 \leqslant  \ell \leqslant d$, where $U_{ -1 } = U_{ d+1 } = 0$ \cite[Lemma 3.9]{Terwilliger2001LAA}.
For $0 \leqslant i \leqslant d$, let $\tau_i, \eta_i$ be the following polynomials in $\mathbb{K}[z]$:
\begin{equation*}
	\tau_i (z) = \prod_{ h=0 }^{ i-1 } ( z - \theta_h ), \quad \eta_i (z) = \tau_i^{ \Downarrow } (z) = \prod_{ h=0 }^{ i-1 } ( z - \theta_{ d-h } ).
\end{equation*}
From the above comments it follows that $\tau_{ \ell } ( A ) \v^* \in U_{ \ell }$ $( 0 \leqslant \ell \leqslant d )$ and $\{ \tau_{ \ell } ( A ) \v^* \}_{ \ell=0 }^d$ is a basis of $V$, called a $\Phi$-\emph{split basis} of $V$.
Moreover, there are nonzero scalars $\varphi_i$ $( 1 \leqslant i \leqslant d)$ in $\mathbb{K}$ such that $A^* \tau_{ \ell } ( A ) \v^* = \theta_{ \ell }^* \tau_{ \ell } ( A ) \v^* + \varphi_{ \ell } \tau_{ \ell-1 } ( A ) \v^*$ $( 1 \leqslant \ell \leqslant d)$.

Let $\phi_i = \varphi_i^{ \Downarrow }$ $( 1 \leqslant i \leqslant d )$.
The \emph{parameter array} of $\Phi$ is
\begin{equation*}
	p( \Phi ) = \bigl( \{ \theta_i \}_{ i=0 }^d ; \{ \theta_i^* \}_{ i=0 }^d ; \{ \varphi_i \}_{ i=1 }^d ; \{ \phi_i \}_{ i=1 }^d \bigr).
\end{equation*}
Terwilliger \cite[Theorem 1.9]{Terwilliger2001LAA} showed that the isomorphism class\footnote{A Leonard system $\Psi$ in a $\mathbb{K}$-algebra $\mathscr{B}$ is \emph{isomorphic} to $\Phi$ if there is a $\mathbb{K}$-algebra isomorphism $\gamma : \mathscr{A} \rightarrow \mathscr{B}$ such that $\Psi = \Phi^{ \gamma } := \bigl( A^{ \gamma } ; A^{ * \gamma } ; \{ E_i^{ \gamma } \}_{ i=0 }^d ; \{ E_i^{ * \gamma } \}_{ i=0 }^d \bigr)$.
} of $\Phi$ is determined by $p(\Phi)$ and gave a classification of the parameter arrays of Leonard systems; cf.~\cite[Section 5]{Terwilliger2005DCC}.
In particular, the sequences $\{ \theta_i \}_{ i=0 }^d$ and $\{ \theta_i^* \}_{ i=0 }^d$ are recurrent in the sense that there is a scalar $\beta \in \mathbb{K}$ such that
\begin{equation}\label{PA5}
	\frac{ \theta_{ i-2 } - \theta_{ i+1 } }{ \theta_{ i-1 } - \theta_i } = \dfrac{ \theta_{ i-2 }^* - \theta_{ i+1 }^* }{ \theta_{ i-1 }^* - \theta_i^* } = \beta+1 \quad ( 2 \leqslant i \leqslant d - 1 ).
\end{equation}
It also follows that
\begin{equation}\label{PA4}
	\phi_i = \varphi_1 \vartheta_i + ( \theta_i^* - \theta_0^* ) ( \theta_{ d-i+1 } - \theta_0 ) \quad ( 1 \leqslant i \leqslant d ),
\end{equation}
where
\begin{equation*}
	\vartheta_i = \sum_{ h=0 }^{ i-1 } \frac{ \theta_h - \theta_{ d-h } } { \theta_0 - \theta_d } \quad ( 1 \leqslant i \leqslant d ).
\end{equation*}
Notice that $\vartheta_1 = \vartheta_d = 1$.
Moreover,
\begin{equation}\label{properties of vartheta}
	\vartheta_{ d-i+1 } = \vartheta_i, \quad \vartheta_i^* = \vartheta_i \quad ( 1 \leqslant i \leqslant d ).
\end{equation}
The parameter array behaves nicely with respect to the $D_4$ action:
\begin{lem}[{\cite[Theorem~1.11]{Terwilliger2001LAA}}]\label{how D4 acts on parameter array}
The following hold.
\begin{align*}
	p( \Phi^* ) =& \bigl( \{ \theta_i^* \}_{ i=0 }^d ; \{ \theta_i \}_{ i=0 }^d ; \{ \varphi_i \}_{ i=1 }^d ; \{ \phi_{ d-i+1 } \}_{ i=1 }^d \bigr). \tag{i} \\
	p( \Phi^{ \downarrow } ) =& \bigl( \{ \theta_i \}_{ i=0 }^d ; \{ \theta_{ d-i }^* \}_{ i=0 }^d ; \{ \phi_{ d-i+1 } \}_{ i=1 }^d ; \{ \varphi_{ d-i+1 } \}_{ i=1 }^d \bigr). \tag{ii} \\
	p( \Phi^{ \Downarrow } ) =& \bigl( \{ \theta_{ d-i } \}_{ i=0 }^d ; \{ \theta_i^* \}_{ i=0 }^d ; \{ \phi_i \}_{ i=1 }^d ; \{ \varphi_i \}_{ i=1 }^d \bigr). \tag{iii}
\end{align*}
\end{lem}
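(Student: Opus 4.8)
The plan is to reduce the statement to the behaviour of the two scalar sequences $\{\varphi_i\}_{i=1}^d$ and $\{\phi_i\}_{i=1}^d$. The eigenvalue sequences $\{\theta_i\}_{i=0}^d$ and $\{\theta_i^*\}_{i=0}^d$ merely list the eigenvalues of $A$ and $A^*$ attached to the successive primitive idempotents in the orderings prescribed by $\Phi$, so their behaviour under $*$ (interchange the two sequences), $\downarrow$ (reverse $\{\theta_i^*\}$) and $\Downarrow$ (reverse $\{\theta_i\}$) is read off immediately from the definitions of $\Phi^*$, $\Phi^{\downarrow}$, $\Phi^{\Downarrow}$. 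Moreover, since $\phi_i=\varphi_i^{\Downarrow}$ by definition, for any Leonard system $\Psi$ we have $\phi_i(\Psi)=\varphi_i(\Psi^{\Downarrow})$; applying this to $\Psi=\Phi^{\Downarrow}$ and using $\Downarrow^2=1$ gives $\varphi_i(\Phi^{\Downarrow})=\phi_i$ and $\phi_i(\Phi^{\Downarrow})=\varphi_i$, which proves part~(iii). Since $D_4$ is generated by $*$ and $\Downarrow$ with $\downarrow={*}\Downarrow{*}$, and since $(\Phi^*)^{\Downarrow}=(\Phi^{\downarrow})^*$ by a one-line check from the definitions, parts~(i) and~(ii) will follow once I establish, for every Leonard system $\Psi$, the two facts (A) $\varphi_i(\Psi^{*})=\varphi_i(\Psi)$ and (B) $\varphi_i(\Psi^{\downarrow})=\phi_{d-i+1}(\Psi)$. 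Indeed, (B) applied to $\Psi=\Phi$ and to $\Psi=\Phi^{\downarrow}$ (using $\downarrow^2=1$) supplies the two scalar sequences of $p(\Phi^{\downarrow})$, while (A) together with
\[
	\phi_i(\Phi^{*})=\varphi_i\bigl((\Phi^{*})^{\Downarrow}\bigr)=\varphi_i\bigl((\Phi^{\downarrow})^{*}\bigr)=\varphi_i(\Phi^{\downarrow})=\phi_{d-i+1}
\]
supplies $p(\Phi^{*})$.

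To prove (A) and (B) I would use the nondegenerate bilinear form $\langle\,\cdot\,,\cdot\,\rangle$ and the antiautomorphism $\dag$ with $A^{\dag}=A$, $A^{*\dag}=A^{*}$. Each $E_i$ (resp.\ $E_i^{*}$) is a polynomial in $A$ (resp.\ $A^{*}$) and hence $\dag$-invariant, so $\langle E_iV,E_jV\rangle=\langle E_i^{*}V,E_j^{*}V\rangle=0$ for $i\neq j$. Writing out the defining intersections of the split decompositions of $\Psi$, $\Psi^{*}$, $\Psi^{\downarrow}$, $\Psi^{\Downarrow}$ and comparing the index ranges, these orthogonalities force
\[
	\langle U_\ell(\Psi),U_m(\Psi^{*})\rangle=0 \quad (\ell\neq m), \qquad \langle U_\ell(\Psi^{\downarrow}),U_m(\Psi^{\Downarrow})\rangle=0 \quad (m\neq d-\ell),
\]
and the surviving pairings are nonzero, $\langle\,\cdot\,,\cdot\,\rangle$ being nondegenerate and $V$ being the direct sum of either family of lines. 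Pinning down these two patterns — that $\Psi$ and $\Psi^{*}$ pair diagonally whereas $\Psi^{\downarrow}$ and $\Psi^{\Downarrow}$ pair anti-diagonally, essentially a careful index computation — is the technical core of the argument.

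Given the orthogonalities, (A) and (B) each reduce to a short two-way evaluation using the split recurrences $A\u_\ell=\theta_\ell\u_\ell+\u_{\ell+1}$, $A^{*}\u_\ell=\theta_\ell^{*}\u_\ell+\varphi_\ell\u_{\ell-1}$ and their counterparts for $\Psi^{*}$, $\Psi^{\downarrow}$, $\Psi^{\Downarrow}$ (for $\Psi^{\Downarrow}$ the $\varphi$-sequence is $\{\phi_i(\Psi)\}$). For (A), with $\u_\ell$ spanning $U_\ell(\Psi)$ and $\u_\ell^{*}$ spanning $U_\ell(\Psi^{*})$, I would evaluate $\langle A^{*}\u_\ell,\u_{\ell-1}^{*}\rangle$ first through the $A^{*}$-recurrence for $\u_\ell$ and then through $\langle\u_\ell,A^{*}\u_{\ell-1}^{*}\rangle$, obtaining $\varphi_\ell(\Psi)=\langle\u_\ell,\u_\ell^{*}\rangle/\langle\u_{\ell-1},\u_{\ell-1}^{*}\rangle$; the mirror computation with $A$ in place of $A^{*}$ gives the same ratio for $\varphi_\ell(\Psi^{*})$ (here using that $\langle\,\cdot\,,\cdot\,\rangle$ is symmetric, which follows from $||\v||^{2}\neq 0$). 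For (B), with $\u_\ell$ spanning $U_\ell(\Psi^{\downarrow})$ and $\u_m'$ spanning $U_m(\Psi^{\Downarrow})$, the $A$-recurrences first force all the scalars $\langle\u_\ell,\u_{d-\ell}'\rangle$ to be equal and nonzero, and then the $A^{*}$-recurrences give $\varphi_\ell(\Psi^{\downarrow})=\phi_{d-\ell+1}(\Psi)$.

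The main obstacle should be bookkeeping rather than any isolated deep step: getting every index range in the intersections $U_\ell(\cdot)$ exactly right so that the diagonal versus anti-diagonal orthogonality patterns come out as stated, and tracking the $D_4$-action on all objects involved — in particular the chosen nonzero vectors $\v^{g}$ and the normalizations of the split bases they induce — carefully enough that the final scalars match the asserted parameter arrays precisely, with no spurious constants or index shifts.
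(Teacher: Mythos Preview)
The paper does not prove this lemma at all; it is quoted verbatim from \cite[Theorem~1.11]{Terwilliger2001LAA} and used as a black box. So there is nothing to compare your argument against here.

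That said, your argument is sound. The eigenvalue sequences are indeed immediate from the definitions of $\Phi^*,\Phi^{\downarrow},\Phi^{\Downarrow}$, and part~(iii) falls out of $\phi_i=\varphi_i^{\Downarrow}$ together with $\Downarrow^2=1$. Your orthogonality claims are correct: for instance, if $\ell<m$ then $U_\ell(\Psi)\subseteq\sum_{i\leqslant\ell}E_i^*V$ while $U_m(\Psi^*)\subseteq\sum_{j\geqslant m}E_j^*V$, and these subspaces are $\langle\,\cdot\,,\cdot\,\rangle$-orthogonal since each $E_i^*$ is a polynomial in $A^*=A^{*\dag}$; the anti-diagonal pairing of $U_\ell(\Psi^{\downarrow})$ with $U_{d-\ell}(\Psi^{\Downarrow})$ is checked the same way. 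The two-way evaluations using the split recurrences then go through exactly as you describe, and your appeal to symmetry of $\langle\,\cdot\,,\cdot\,\rangle$ is legitimate (uniqueness of the form up to scalar plus $||\v||^2\neq 0$; cf.\ \cite[Section~15]{Terwilliger2004LAA}).

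One minor bookkeeping slip: to get the $\phi$-sequence of $\Phi^{\downarrow}$ you wrote ``(B) applied to $\Psi=\Phi^{\downarrow}$'', but that yields $\varphi_i(\Phi)=\phi_{d-i+1}(\Phi^{\downarrow})$, which is the desired identity only after relabelling. The clean route is the one you essentially indicate elsewhere: $\phi_i(\Phi^{\downarrow})=\varphi_i((\Phi^{\downarrow})^{\Downarrow})=\varphi_i((\Phi^{\Downarrow})^{\downarrow})$, then apply (B) with $\Psi=\Phi^{\Downarrow}$ and finish with~(iii). This is cosmetic and does not affect the validity of the argument.
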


The following can be easily read off \cite{Terwilliger2002RMJM,Terwilliger2004LAA}.
\begin{lem}[{\cite{Terwilliger2002RMJM,Terwilliger2004LAA}}]\label{transition matrices between split and standard bases}
The following hold.
\begin{align*}
	& E_i^* \v = \frac{ || E_i^* \v ||^2 }{ \langle \v , \v^* \rangle } \cdot \sum_{ \ell=0 }^i \frac{ \tau_{ \ell }^* ( \theta_i^* ) }{ \varphi_1 \dots \varphi_{ \ell } } \tau_{ \ell } ( A ) \v^* \quad ( 0 \leqslant i \leqslant d ). \tag{i} \\
	& \begin{aligned}[t] \tau_{ \ell } ( A ) \v^* =& \, \langle \v , \v^* \rangle \cdot \varphi_1 \dots \varphi_{ \ell } \\ & \times \sum_{ i=0 }^{ \ell } \frac{ \eta_{ d-\ell }^* ( \theta_i^* ) }{ \tau_i^* ( \theta_i^* ) \eta_{ d-i }^* ( \theta_i^* ) } \cdot \frac{ 1 }{ || E_i^* \v ||^2 } E_i^* \v \quad ( 0 \leqslant \ell \leqslant d ). \end{aligned} \tag{ii} \\
	& E_j \v^* = \sum_{ \ell=j }^d \frac{ \eta_{ d-\ell } ( \theta_j ) }{ \tau_j ( \theta_j ) \eta_{ d-j } ( \theta_j ) } \tau_{ \ell } ( A ) \v^* \quad ( 0 \leqslant j \leqslant d ). \tag{iii} \\
	& \tau_{ \ell } ( A ) \v^* = \sum_{ j=\ell }^d \tau_{ \ell } ( \theta_j ) E_j \v^* \quad ( 0 \leqslant \ell \leqslant d ). \tag{iv} \\
	& E_j \v^{ * \downarrow } = \dfrac{ \langle \v , \v^{ * \downarrow } \rangle }{ \langle \v , \v^* \rangle } \cdot \dfrac{ \phi_{ d-j+1 } \dots \phi_d }{ \varphi_1 \dots \varphi_j} E_j \v^* \quad ( 0 \leqslant j \leqslant d ). \tag{v}
\end{align*}
\end{lem}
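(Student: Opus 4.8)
\emph{Proof proposal.}
The plan is to derive parts (i)--(iv) from the two recurrences carried by the $\Phi$-split basis, namely
\[
	A \tau_{ \ell } ( A ) \v^* = \theta_{ \ell } \tau_{ \ell } ( A ) \v^* + \tau_{ \ell+1 } ( A ) \v^* , \qquad A^* \tau_{ \ell } ( A ) \v^* = \theta_{ \ell }^* \tau_{ \ell } ( A ) \v^* + \varphi_{ \ell } \tau_{ \ell-1 } ( A ) \v^* ,
\]
the first because $\tau_{ \ell+1 } (z) = ( z - \theta_{ \ell } ) \tau_{ \ell } (z)$ and $\tau_{ d+1 } ( A ) \v^* \in U_{ d+1 } = 0$, the second recorded in Section~\ref{sec: Leonard systems}; together with the inclusions $E_i^* V \subseteq \sum_{ \ell=0 }^i U_{ \ell } = \mathrm{span} \{ \tau_0 ( A ) \v^* , \dots , \tau_i ( A ) \v^* \}$ and $E_j V \subseteq \sum_{ \ell=j }^d U_{ \ell } = \mathrm{span} \{ \tau_j ( A ) \v^* , \dots , \tau_d ( A ) \v^* \}$ supplied by the $\Phi$-split decomposition; and finally the form $\langle \cdot , \cdot \rangle$ (which is symmetric, cf.~\cite{Terwilliger2004LAA}) to fix the scalars. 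Part (v) is the transition matrix between two of the $24$ bases of \cite{Terwilliger2002RMJM} (both standard), and can be extracted from there.

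Part (iv) is immediate: expand $\v^* = \sum_{ j=0 }^d E_j \v^*$ and apply $\tau_{ \ell } ( A )$; since $A E_j = \theta_j E_j$ this gives $\tau_{ \ell } ( A ) \v^* = \sum_{ j=0 }^d \tau_{ \ell } ( \theta_j ) E_j \v^*$, and $\tau_{ \ell } ( \theta_j ) = 0$ for $j < \ell$. For part (iii) I would determine $E_j \v^*$ as an eigenvector of $A$: writing $E_j \v^* = \sum_{ \ell=j }^d c_{ \ell } \tau_{ \ell } ( A ) \v^*$ and imposing $( A - \theta_j I ) E_j \v^* = 0$, the first recurrence above forces $c_{ \ell-1 } = ( \theta_j - \theta_{ \ell } ) c_{ \ell }$ for $\ell > j$, whence $c_{ \ell } = c_j \prod_{ h=j+1 }^{ \ell } ( \theta_j - \theta_h )^{ -1 } = c_j \, \eta_{ d-\ell } ( \theta_j ) / \eta_{ d-j } ( \theta_j )$; and comparing the $\ell=j$ term with $1 / \tau_j ( \theta_j )$ --- the value forced because (iii) and (iv) are mutually inverse triangular changes of basis --- gives $c_j = 1 / \tau_j ( \theta_j )$, which is (iii).

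For part (i) I would argue the same way on $A^*$: writing $E_i^* \v = \sum_{ \ell=0 }^i a_{ \ell } \tau_{ \ell } ( A ) \v^*$ and imposing $( A^* - \theta_i^* I ) E_i^* \v = 0$, the second recurrence forces $a_{ \ell+1 } = a_{ \ell } ( \theta_i^* - \theta_{ \ell }^* ) / \varphi_{ \ell+1 }$, whence $a_{ \ell } = a_0 \, \tau_{ \ell }^* ( \theta_i^* ) / ( \varphi_1 \dots \varphi_{ \ell } )$. To pin down $a_0$, pair both sides with $\v$: on the left $\langle E_i^* \v , \v \rangle = \langle \v , ( E_i^* )^2 \v \rangle = || E_i^* \v ||^2$ since $( E_i^* )^{ \dag } = E_i^*$, while on the right $\langle \tau_{ \ell } ( A ) \v^* , \v \rangle = \langle \v^* , \tau_{ \ell } ( \theta_0 ) \v \rangle = \delta_{ \ell , 0 } \langle \v^* , \v \rangle$ because $\v \in E_0 V$ and $\tau_{ \ell } ( \theta_0 ) = 0$ for $\ell \geqslant 1$; hence $a_0 = || E_i^* \v ||^2 / \langle \v^* , \v \rangle = || E_i^* \v ||^2 / \langle \v , \v^* \rangle$, which is (i). Part (ii), being the inverse of (i), follows either by inverting that triangular system directly, or, more cheaply, by noting that $A^*$ is diagonal in the standard basis $\{ E_i^* \v \}$: writing $\tau_{ \ell } ( A ) \v^* = \sum_{ i=0 }^{ \ell } x_{ \ell i } E_i^* \v$ and using the second recurrence again gives $x_{ \ell i } = ( \varphi_{ \ell } / ( \theta_i^* - \theta_{ \ell }^* ) ) x_{ \ell-1 , i }$ for $i < \ell$; iterating and normalizing $x_{ ii }$ by the reciprocal of the diagonal of (i) reproduces the stated formula, using $\prod_{ h=i+1 }^{ \ell } ( \theta_i^* - \theta_h^* )^{ -1 } = \eta_{ d-\ell }^* ( \theta_i^* ) / \eta_{ d-i }^* ( \theta_i^* )$.

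Finally, for part (v): $E_j \v^*$ and $E_j \v^{ * \downarrow }$ both lie in the one-dimensional space $E_j V$, hence are proportional, and it remains to identify the scalar. One route is to quote the transition matrix between these two standard bases directly from \cite{Terwilliger2002RMJM}, translating between the parameter arrays of $\Phi$ and $\Phi^{ \downarrow }$ via Lemma~\ref{how D4 acts on parameter array}. A self-contained route is to compute $\langle \v^{ * \downarrow } , E_j \v^* \rangle$ using (iii): since $\v^{ * \downarrow } \in E_d^* V$ and $\tau_{ \ell } ( A ) \v^* \in \sum_{ i=0 }^{ \ell } E_i^* V$, only the $\ell=d$ term survives, so $\langle \v^{ * \downarrow } , E_j \v^* \rangle = ( \tau_j ( \theta_j ) \eta_{ d-j } ( \theta_j ) )^{ -1 } \langle \v^{ * \downarrow } , \tau_d ( A ) \v^* \rangle$; dividing by $|| E_j \v^* ||^2$ and invoking the standard expression for this norm in terms of the parameter array (cf.~\cite{Terwilliger2004LAA}) yields the claimed scalar. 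I expect the main difficulty to be bookkeeping rather than conceptual: keeping the several triangular systems, their inverses, and the $D_4$-relabellings consistent, and correctly matching the products of $\tau^*$, $\eta^*$, $\varphi$ and $\phi$ that arise; the only genuinely external ingredient is the value of a norm such as $|| E_j \v^* ||^2$, which enters only in (v).
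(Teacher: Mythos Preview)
Your proposal is correct. The paper itself does not supply a proof of this lemma at all: it is stated with the attribution ``can be easily read off \cite{Terwilliger2002RMJM,Terwilliger2004LAA}'' and left at that. Your route --- deriving (i)--(iv) directly from the two split-basis recurrences for $A$ and $A^*$, fixing the free constants via pairings with $\v$, and handling (v) either by citation or by the pairing $\langle \v^{*\downarrow}, E_j \v^* \rangle$ together with the known norm $||E_j\v^*||^2$ --- is exactly the kind of argument those references contain, so you are effectively reconstructing what the paper merely cites. The details check out: in (iii) and (i) your recursions and normalizations are correct (in particular the use of $\tau_\ell(\theta_0)=0$ for $\ell\geqslant 1$ to isolate $a_0$, and the symmetry of $\langle\cdot,\cdot\rangle$ from \cite[Section~15]{Terwilliger2004LAA}); in (ii) your recursion $x_{\ell i}=\varphi_\ell x_{\ell-1,i}/(\theta_i^*-\theta_\ell^*)$ together with the diagonal from (i) reproduces the stated formula; and for (v) your observation that only the $\ell=d$ term of (iii) survives the pairing with $\v^{*\downarrow}\in E_d^*V$ is the right mechanism, with the final scalar then fixed by $||E_j\v^*||^2$ as given e.g.\ by \eqref{squared norm} applied to $\Phi^*$. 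So your write-up is strictly more self-contained than the paper's treatment, at the cost of the bookkeeping you already flagged.
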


Finally, it follows that (\cite[Lemma 9.2, Theorem 17.12]{Terwilliger2004LAA})
\begin{equation*}
	E_0^* E_i E_0^* = \frac{ \varphi_1 \dots \varphi_i \phi_1 \dots \phi_{ d-i } }{ \eta_d^* ( \theta_0^* ) \tau_i ( \theta_i ) \eta_{ d-i } (\theta_i ) } E_0^* \quad ( 0 \leqslant i \leqslant d ),
\end{equation*}
from which it follows that
\begin{equation}\label{squared norm}
	|| E_i^* \v ||^2 = \frac{ \varphi_1 \dots \varphi_i \phi_{ i+1 } \dots \phi_d }{ \eta_d ( \theta_0 ) \tau_i^* ( \theta_i^* ) \eta_{ d-i }^* (\theta_i^* ) } || \v ||^2 \quad ( 0 \leqslant i \leqslant d ),
\end{equation}
by virtue of Lemma \ref{how D4 acts on parameter array} (i).

\section{The Erd\H{o}s--Ko--Rado basis}\label{sec: EKR basis}

Let $F_{ \ell } : V \rightarrow U_{ \ell }$ $( 0 \leqslant \ell \leqslant d )$ be the projection map onto $U_{ \ell }$ with respect to the $\Phi$-split decomposition $V = \bigoplus_{ \ell=0 }^d U_{ \ell }$.

\begin{lem}[cf.~{\cite[Lemma 5.4]{ITT2001P}}]\label{when FE* and FE vanish}
The following hold.
\begin{align*}
	& F_{ \ell } E_i^* = 0 \ \text{if} \ \ell > i \quad ( 0 \leqslant i , \ell \leqslant d ). \tag{i} \\
	& F_{ \ell } E_j = 0 \ \text{if} \ \ell < j \quad ( 0 \leqslant j , \ell \leqslant d ). \tag{ii}
\end{align*}
\end{lem}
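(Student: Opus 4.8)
The plan is to reduce both statements to the single observation that the partial sums of the two eigenspace decompositions of $V$ coincide with the partial sums of the $\Phi$-split decomposition. So first I would establish
\[
	\sum_{i=0}^{\ell} E_i^* V = \bigoplus_{h=0}^{\ell} U_h, \qquad \sum_{j=\ell}^{d} E_j V = \bigoplus_{h=\ell}^{d} U_h \qquad ( 0 \leqslant \ell \leqslant d ).
\]
For the first identity, the very definition of $U_h$ gives $U_h \subseteq \sum_{i=0}^{h} E_i^* V \subseteq \sum_{i=0}^{\ell} E_i^* V$ for every $h$ with $0 \leqslant h \leqslant \ell$, whence $\bigoplus_{h=0}^{\ell} U_h \subseteq \sum_{i=0}^{\ell} E_i^* V$. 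Since $V = \bigoplus_{i=0}^{d} E_i^* V$ with each summand one-dimensional, the partial sum $\sum_{i=0}^{\ell} E_i^* V$ is in fact direct and has dimension $\ell+1$, which equals $\dim \bigoplus_{h=0}^{\ell} U_h$; the inclusion is therefore an equality. The second identity is proved in the same way, using $U_h \subseteq \sum_{j=h}^{d} E_j V$ and the direct sum $V = \bigoplus_{j=0}^{d} E_j V$.

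Granting this, part (i) is immediate: for $0 \leqslant i \leqslant d$ we have $E_i^* V \subseteq \sum_{h=0}^{i} E_h^* V = \bigoplus_{h=0}^{i} U_h$, and since $F_{\ell}$ is the projection onto $U_{\ell}$ with respect to $V = \bigoplus_{h=0}^{d} U_h$, it annihilates $U_h$ for every $h \ne \ell$, hence annihilates $\bigoplus_{h=0}^{i} U_h$ whenever $\ell > i$; thus $F_{\ell} E_i^* = 0$. Likewise, for part (ii), $E_j V \subseteq \sum_{h=j}^{d} E_h V = \bigoplus_{h=j}^{d} U_h$, which is annihilated by $F_{\ell}$ whenever $\ell < j$, giving $F_{\ell} E_j = 0$.

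I do not expect a real obstacle here. The only point needing a little care is the dimension bookkeeping in the two displayed identities, namely upgrading the trivial bound $\dim \sum_{i=0}^{\ell} E_i^* V \leqslant \ell+1$ to an equality, which is exactly the content of the hypotheses $V = \bigoplus_{i=0}^{d} E_i^* V = \bigoplus_{j=0}^{d} E_j V$ together with $\dim E_i^* V = \dim E_j V = 1$. Alternatively, one can simply cite the structure of the split decomposition as developed in \cite{Terwilliger2001LAA, Terwilliger2004LAA} (or \cite[Lemma 5.4]{ITT2001P}), where the partial-sum identities above are recorded, and then conclude precisely as in the previous paragraph.
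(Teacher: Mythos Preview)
Your argument is correct and follows essentially the same route as the paper: the paper's one-line proof simply invokes the inclusions $E_i^* V \subseteq \sum_{h=0}^{i} U_h$ and $E_j V \subseteq \sum_{h=j}^{d} U_h$, which are precisely what your partial-sum identities yield. You have merely supplied the dimension-count justification for those inclusions rather than citing them as known.
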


\begin{proof}
Immediate from $E_i^* V \subseteq \sum_{ \ell=0 }^i U_{ \ell }$ and $E_j V \subseteq \sum_{ \ell=j }^d U_{ \ell }$.
\end{proof}

\noindent
We shall mainly work with the $\Phi^{ \downarrow }$-split decomposition $V = \bigoplus_{ \ell=0 }^d U_{ \ell }^{ \downarrow }$, where
\begin{equation*}
	U_{ \ell }^{ \downarrow } = \Biggl( \sum_{ i=d-\ell }^d E_i^* V \Biggr) \cap \Biggl( \sum_{ j=\ell }^d E_j V \Biggr) \quad ( 0 \leqslant \ell \leqslant d ).
\end{equation*}
We now ``modify'' the $U_{ \ell }^{ \downarrow }$ and introduce the subspaces $W_t$ $( 0 \leqslant t \leqslant d )$ of $V$ defined by\footnote{The subscript $t$ is chosen in accordance with the concept of $t$-\emph{intersecting families} in the Erd\H{o}s--Ko--Rado theorem; see Section \ref{sec: applications}.}
\begin{equation*}
	W_t = \Biggl( E_0^* V + \sum_{ i=d-t+1 }^d E_i^* V \Biggr) \cap \Biggl( E_0 V + \sum_{ j=t+1 }^d E_j V \Biggr) \quad ( 0 \leqslant t \leqslant d ).
\end{equation*}
Observe $W_t \ne 0$ $( 0 \leqslant t \leqslant d )$, $W_0 = E_0^* V$, and $W_d = E_0 V$.
Notice also that
\begin{equation}\label{Phi*-EKR}
	W_t^* = W_{ d-t } \quad ( 0 \leqslant t \leqslant d ).
\end{equation}
Our aim is to show $\dim W_t = 1$ $( 0 \leqslant t \leqslant d )$, and then to determine precisely when $V = \bigoplus_{ t=0 }^d W_t$.
Pick $\w \in W_t$.
Then from Lemma \ref{when FE* and FE vanish} (applied to $\Phi^{ \downarrow }$) it follows that
\begin{equation*}
	F_{ \ell }^{ \downarrow } \w = \sum_{ i=0 }^{ d-\ell } F_{ \ell }^{ \downarrow } E_i^* \w = \sum_{ j=0 }^{ \ell } F_{ \ell }^{ \downarrow } E_j \w \quad ( 0 \leqslant \ell \leqslant d ).
\end{equation*}
Hence
\begin{equation}\label{Flw equals FlE0w or FlE0*w}
	F_{ \ell }^{ \downarrow } \w = \begin{cases} F_{\ell}^{\downarrow} E_0 \w & \text{if} \ 0 \leqslant \ell \leqslant t, \\ F_{\ell}^{\downarrow} E_0^* \w & \text{if} \ t \leqslant \ell \leqslant d, \end{cases}
\end{equation}
from which it follows that
\begin{equation}\label{Wt in terms of E0V, E0*V}
	\w = \sum_{ \ell=0 }^t F_{ \ell }^{ \downarrow } E_0 \w + \sum_{ \ell=t+1 }^d F_{ \ell }^{ \downarrow } E_0^* \w = E_0 \w + \sum_{ \ell=t+1 }^d F_{ \ell }^{ \downarrow } ( E_0^* - E_0 ) \w.
\end{equation}
By Lemma \ref{transition matrices between split and standard bases} (i) and Lemma \ref{how D4 acts on parameter array} (ii), we have
\begin{align}\label{E0*V part of Wt}
	F_{ \ell }^{ \downarrow } E_0^* \w =& F_{ \ell }^{ \downarrow } E_d^{ * \downarrow } \w \\
	=& \frac{ \langle \w, E_d^{ * \downarrow } \v^{ \downarrow } \rangle }{ || E_d^{ * \downarrow } \v^{ \downarrow } ||^2 } F_{ \ell }^{ \downarrow } E_d^{ * \downarrow } \v^{ \downarrow } \notag \\
	=& \frac{ \langle \w, E_d^{ * \downarrow } \v^{ \downarrow } \rangle }{ \langle \v^{ \downarrow } , \v^{ * \downarrow } \rangle } \cdot \frac{ \tau_{ \ell }^{ * \downarrow } ( \theta_d^{ * \downarrow } ) }{ \varphi_1^{ \downarrow } \dots \varphi_{ \ell }^{ \downarrow } } \tau_{ \ell }^{ \downarrow } ( A^{ \downarrow } ) \v^{ * \downarrow } \notag \\
	=& \frac{ \langle \w, E_0^* \v \rangle }{ \langle \v , \v^{ * \downarrow } \rangle } \cdot \frac{ \eta_{ \ell }^* ( \theta_0^* ) }{ \phi_{ d-\ell+1 } \dots \phi_d } \tau_{ \ell } ( A ) \v^{ * \downarrow } \notag
\end{align}
for $0 \leqslant \ell \leqslant d$.
Likewise, by Lemma \ref{transition matrices between split and standard bases} (iii) and Lemma \ref{how D4 acts on parameter array} (ii), we have
\begin{align}\label{E0V part of Wt}
	F_{ \ell }^{ \downarrow } E_0 \w =& F_{ \ell }^{ \downarrow } E_0^{ \downarrow } \w \\
	=& \frac{ \langle \w , E_0^{ \downarrow } \v^{ * \downarrow } \rangle }{ || E_0^{ \downarrow } \v^{ * \downarrow } ||^2 } F_{ \ell }^{ \downarrow } E_0^{ \downarrow } \v^{ * \downarrow } \notag \\
	=& \frac{ \langle \w , E_0 \v^{ * \downarrow } \rangle }{ || E_0 \v^{ * \downarrow } ||^2 } \cdot \frac{ \eta_{ d-\ell } ( \theta_0 ) }{ \eta_d ( \theta_0 ) } \tau_{ \ell } ( A ) \v^{ * \downarrow } \notag
\end{align}
for $0 \leqslant \ell \leqslant d$.
Since $F_t^{ \downarrow } E_0^* \w = F_t^{ \downarrow } E_0 \w$ by \eqref{Flw equals FlE0w or FlE0*w}, we have in particular:
\begin{equation}\label{essentially EKR bound}
	\frac{ \langle \w, E_0^* \v \rangle }{ \langle \v , \v^{ * \downarrow } \rangle } \cdot \frac{ \eta_t^* ( \theta_0^* ) }{ \phi_{ d-t+1 } \dots \phi_d } = \frac{ \langle \w, E_0 \v^{ * \downarrow } \rangle }{ || E_0 \v^{ * \downarrow } ||^2 } \cdot \frac{ \eta_{ d-t } ( \theta_0 ) }{ \eta_d ( \theta_0 ) }.
\end{equation}
Combining these comments, it follows from \eqref{Wt in terms of E0V, E0*V}, Lemma \ref{transition matrices between split and standard bases} (iv) and (v) that
\begin{align*}
	\w =& E_0 \w + \frac{ \langle \w, E_0 \v^{ * \downarrow } \rangle }{ || E_0 \v^{ * \downarrow } ||^2} \cdot \frac{ \eta_{ d-t } ( \theta_0 ) }{ \eta_d ( \theta_0 ) \eta_t^* ( \theta_0^* ) } \\
	& \times \sum_{ \ell=t+1 }^d \biggl( \frac{ \eta_{ \ell }^* ( \theta_0^* ) }{ \phi_{ d-\ell+1 } \dots \phi_{ d-t } } - \frac{ \eta_t^* ( \theta_0^* ) \eta_{ d-\ell } ( \theta_0 ) }{ \eta_{ d-t } ( \theta_0 ) } \biggr) \tau_{ \ell } ( A ) \v^{ * \downarrow } \\
	=& E_0 \w + \frac{ \langle \w, E_0 \v^* \rangle }{ || E_0 \v^* ||^2} \cdot \frac{ \eta_{ d-t } ( \theta_0 ) }{ \eta_d ( \theta_0 ) \eta_t^* ( \theta_0^* ) } \sum_{ j=t+1 }^d \frac{ \phi_{ d-j+1 } \dots \phi_d }{ \varphi_1 \dots \varphi_j } \notag \\
	& \times \sum_{ \ell=t+1 }^j \tau_{ \ell } ( \theta_j ) \biggl( \frac{ \eta_{ \ell }^* ( \theta_0^*) }{ \phi_{ d-\ell+1 } \dots \phi_{ d-t } } - \frac{ \eta_t^* ( \theta_0^* ) \eta_{ d-\ell } ( \theta_0 ) }{ \eta_{ d-t } ( \theta_0 ) } \biggr) E_j \v^*.
\end{align*}
The coefficient of the last sum is equal to $( \theta_j - \theta_0 )^{-1}$ times
\begin{align*}
	\sum_{ \ell = t + 1 }^j & ( \theta_j -\theta_{ \ell } + \theta_{ \ell } - \theta_0 ) \cdot \tau_{ \ell } ( \theta_j ) \biggl( \frac{ \eta_{ \ell }^* ( \theta_0^* ) }{ \phi_{ d-\ell+1 } \dots \phi_{ d-t } } - \frac{ \eta_t^* ( \theta_0^* ) \eta_{ d-\ell } ( \theta_0 ) }{ \eta_{ d-t } ( \theta_0 ) } \biggr) \\
	=& \sum_{ \ell = t+1 }^{ j-1 } \tau_{ \ell+1 } ( \theta_j ) \biggl( \frac{ \eta_{ \ell }^* ( \theta_0^* ) }{ \phi_{ d-\ell+1 } \dots \phi_{ d-t } } - \frac{ \eta_t^* ( \theta_0^* ) \eta_{ d-\ell } ( \theta_0 ) }{ \eta_{ d-t } ( \theta_0 ) } \biggr) \\
	& - \sum_{ \ell = t+1 }^j \tau_{ \ell } ( \theta_j ) \biggl( \frac{ \eta_{ \ell }^* ( \theta_0^* ) ( \theta_0 - \theta_{ \ell } ) }{ \phi_{ d-\ell+1 } \dots \phi_{ d-t } } - \frac{ \eta_t^* ( \theta_0^* ) \eta_{ d-\ell+1 } ( \theta_0 ) }{ \eta_{ d-t } ( \theta_0 ) } \biggr) \\
	=& \sum_{ \ell = t+1 }^j \tau_{ \ell } ( \theta_j ) \biggl( \frac{ \eta_{ \ell-1 }^* ( \theta_0^* ) }{ \phi_{ d-\ell+2 } \dots \phi_{ d-t } } - \frac{ \eta_{ \ell }^* ( \theta_0^* ) ( \theta_0 - \theta_{ \ell } ) }{ \phi_{ d-\ell+1 } \dots \phi_{ d-t } } \biggr) \\
	=& \sum_{ \ell = t+1 }^j \frac{ \tau_{ \ell } ( \theta_j ) \eta_{ \ell-1 }^* ( \theta_0^* ) }{ \phi_{ d-\ell+1 } \dots \phi_{ d-t } } \bigl( \phi_{ d-\ell+1 } - ( \theta_0^* - \theta_{ d-\ell+1 }^* ) ( \theta_0 - \theta_{ \ell } ) \bigr) \\
	=& \sum_{ \ell = t+1 }^j \frac{ \tau_{ \ell } ( \theta_j ) \eta_{ \ell-1 }^* ( \theta_0^* ) }{ \phi_{ d-\ell+1 } \dots \phi_{ d-t } } \varphi_1 \vartheta_{ \ell },
\end{align*}
where we have used \eqref{PA4} and \eqref{properties of vartheta}.
Hence
\begin{prop}\label{Wt in terms of standard}
Let $\w \in W_t$.
Then the following hold.
\begin{align*}
	\w =& E_0 \w + \frac{ \langle \w, E_0 \v^* \rangle }{ || E_0 \v^* ||^2} \cdot \frac{ \eta_{ d-t } ( \theta_0 ) }{ \eta_d ( \theta_0 ) \eta_t^* ( \theta_0^* ) } \tag{i} \\
	& \times \sum_{ j=t+1 }^d \frac{ \phi_{ d-j+1 } \dots \phi_d }{ \varphi_2 \dots \varphi_j ( \theta_j - \theta_0 ) } \Biggl( \sum_{ \ell = t+1 }^j \frac{ \tau_{ \ell } ( \theta_j ) \eta_{ \ell-1 }^* ( \theta_0^* ) \vartheta_{ \ell } }{ \phi_{ d-\ell+1 } \dots \phi_{ d-t } } \Biggr) E_j \v^*.
\end{align*}
\begin{align*}
	\w =& E_0^* \w + \frac{ \langle \w, E_0^* \v \rangle }{ || E_0^* \v ||^2} \cdot \frac{ \eta_t^* ( \theta_0^* ) }{ \eta_d^* ( \theta_0^* ) \eta_{ d-t } ( \theta_0 ) } \tag{ii} \\
	& \times \sum_{ i=d-t+1 }^d \frac{ \phi_1 \dots \phi_i }{ \varphi_2 \dots \varphi_i ( \theta_i^* - \theta_0^* ) } \Bigg( \sum_{ \ell = d-t+1 }^i \frac{ \tau_{ \ell }^* ( \theta_i^* ) \eta_{ \ell-1 } ( \theta_0 ) \vartheta_{ \ell } }{ \phi_{ d-t+1} \dots \phi_{ \ell } } \Biggr) E_i^* \v.
\end{align*}
In particular, $E_0 W_t \ne 0$, $E_0^* W_t \ne 0$, and $\dim W_t = 1$.
\end{prop}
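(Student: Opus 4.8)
The plan is to treat the three assertions in turn, drawing the bulk of the work from the computation already carried out before the statement. For part (i), I would simply organize that computation into a proof: given $\w\in W_t$, the starting point is \eqref{Wt in terms of E0V, E0*V}, which writes $\w$ as $E_0\w$ plus the vectors $F_\ell^{\downarrow}(E_0^*-E_0)\w$ with $t<\ell\leqslant d$; then one substitutes the $\Phi$-split-basis expansions \eqref{E0*V part of Wt} and \eqref{E0V part of Wt} of $F_\ell^{\downarrow}E_0^*\w$ and $F_\ell^{\downarrow}E_0\w$, uses \eqref{essentially EKR bound} (which is the identity $F_t^{\downarrow}E_0^*\w=F_t^{\downarrow}E_0\w$) to express $\langle\w,E_0^*\v\rangle$ through $\langle\w,E_0\v^{*\downarrow}\rangle$, rewrites $\v^{*\downarrow}$ in terms of $\v^*$ and passes to the $\Phi$-standard basis via Lemma \ref{transition matrices between split and standard bases}(v),(iv), and finally collapses the resulting double sum. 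The collapse is carried out by writing $\theta_j-\theta_0=(\theta_j-\theta_\ell)+(\theta_\ell-\theta_0)$, using $(\theta_j-\theta_\ell)\tau_\ell(\theta_j)=\tau_{\ell+1}(\theta_j)$ to shift the index, and Abel-summing; the coefficient that emerges is $\phi_{d-\ell+1}-(\theta_0^*-\theta_{d-\ell+1}^*)(\theta_0-\theta_\ell)$, which equals $\varphi_1\vartheta_\ell$ by \eqref{PA4} and \eqref{properties of vartheta}. This gives (i).

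For part (ii), I would deduce it from (i) by applying it to $\Phi^*$. By \eqref{Phi*-EKR} we have $W_{d-t}(\Phi^*)=W_t$, so a generic $\w\in W_t$ is a generic element of $W_{d-t}(\Phi^*)$, and (i) for $\Phi^*$ with parameter $d-t$ expands it in the $\Phi^*$-standard basis. It then remains to translate every ingredient back to $\Phi$ using Lemma \ref{how D4 acts on parameter array}(i) and the notational convention: $\theta_i(\Phi^*)=\theta_i^*$, $\theta_i^*(\Phi^*)=\theta_i$, $\varphi_i(\Phi^*)=\varphi_i$, $\phi_i(\Phi^*)=\phi_{d-i+1}$, $\vartheta_i(\Phi^*)=\vartheta_i$ by \eqref{properties of vartheta}, $\tau_i(\Phi^*)=\tau_i^*$, $\eta_i(\Phi^*)=\eta_i^*$, $E_j(\Phi^*)=E_j^*$, $\v^*(\Phi^*)=\v$, while $\dag$ and hence the bilinear form are unchanged because the defining conditions of $\dag$ are symmetric in $A$ and $A^*$. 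Substituting these and relabelling the summation index turns the formula of (i) into that of (ii); alternatively one could rerun the argument of (i) verbatim with the roles of the $E_i^*$ and the $E_j$ exchanged.

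For the remaining assertions: since $E_0$ is a polynomial in $A$ and $A^{\dag}=A$, we have $E_0^{\dag}=E_0$, so for $\w\in W_t$, $\langle\w,E_0\v^*\rangle=\langle E_0\w,\v^*\rangle$, and this scalar vanishes when $E_0\w=0$. By (i) this forces $\w=0$, so the linear map $W_t\to E_0V$, $\w\mapsto E_0\w$, is injective. As $\dim E_0V=1$ and $W_t\neq0$ (noted before the statement), this map is an isomorphism; hence $\dim W_t=1$ and $E_0W_t=E_0V\neq0$. The same argument applied to (ii), with $E_0^{*\dag}=E_0^*$, shows that $\w\mapsto E_0^*\w$ is injective on $W_t$, whence $E_0^*W_t=E_0^*V\neq0$.

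The step I expect to be the main obstacle is the telescoping in (i): after the index shift one must land exactly on a difference of the shape $\eta_{\ell-1}^*(\theta_0^*)/(\phi_{d-\ell+2}\dots\phi_{d-t})-\eta_\ell^*(\theta_0^*)(\theta_0-\theta_\ell)/(\phi_{d-\ell+1}\dots\phi_{d-t})$, so that \eqref{PA4} applies cleanly and the surviving $\phi$-products are precisely $\phi_{d-\ell+1}\dots\phi_{d-t}$ as claimed; this is where the specific structure of the parameter array enters. Everything else is bookkeeping with the direct-sum decompositions and the transition formulas of Lemma \ref{transition matrices between split and standard bases}.
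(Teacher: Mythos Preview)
Your proposal is correct and follows essentially the same route as the paper: part (i) is precisely the computation displayed before the statement (the paper simply says ``Clear''), part (ii) is obtained from (i) via \eqref{Phi*-EKR}, Lemma~\ref{how D4 acts on parameter array}(i) and \eqref{properties of vartheta}, and the final assertions follow because (i) and (ii) show that each of $E_0\w$, $E_0^*\w$ determines $\w$. Your more explicit justification of the ``in particular'' clause (via $E_0^{\dag}=E_0$ and injectivity of $\w\mapsto E_0\w$) is a fine elaboration of what the paper compresses into a single sentence.
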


\begin{proof}
(i):
Clear.

(ii):
By virtue of \eqref{Phi*-EKR}, the result follows from (i) above, together with Lemma \ref{how D4 acts on parameter array} (i) and \eqref{properties of vartheta}.

The last line follows by noting that each of $E_0 \w$, $E_0^* \w$ determines $\w$.
\end{proof}

\begin{notn}\label{beta}
Henceforth we let $q$ be a nonzero scalar in the algebraic closure $\overline{\mathbb{K}}$ of $\mathbb{K}$ such that $q + q^{ -1 } = \beta$, where the scalar $\beta$ is from \eqref{PA5}.
We call $q$ a \emph{base} for $\Phi$.\footnote{We may remark that if $d \geqslant 3$ then $\Phi$ has at most two bases, i.e., $q$ and $q^{ -1 }$.}
By convention, if $d<3$ then $q$ can be taken to be \emph{any} nonzero scalar in $\overline{\mathbb{K}}$.
\end{notn}

\begin{lem}[{cf.~\cite[(6.4)]{Tanaka2009LAAb}}]\label{when vartheta=0}
For $1 \leqslant i \leqslant d$, we have $\vartheta_i = 0$ precisely when $q = -1$, $d$ is odd, and $i$ is even.
\end{lem}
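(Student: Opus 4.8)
The plan is to put $\vartheta_i$ in closed form by solving the recurrence \eqref{PA5} and then to read off precisely when it vanishes. One first disposes of $d\leqslant2$, where $\vartheta_i=\vartheta_1=\vartheta_d=1\neq0$ and the asserted condition never holds, so assume $d\geqslant3$. Rewrite \eqref{PA5} as the third-order linear recurrence $\theta_{h+3}-(\beta+1)\theta_{h+2}+(\beta+1)\theta_{h+1}-\theta_h=0$ ($0\leqslant h\leqslant d-3$), whose characteristic polynomial factors as $(z-1)(z^2-\beta z+1)=(z-1)(z-q)(z-q^{-1})$. Hence over $\overline{\mathbb{K}}$ the sequence $\{\theta_i\}_{i=0}^d$ has one of the standard shapes $\theta_i=a+bq^i+cq^{-i}$ (if $q\neq\pm1$), $\theta_i=a+bi+ci^2$ (if $q=1$), or $\theta_i=a+b(-1)^i+ci(-1)^i$ (if $q=-1$, in which case $\mathrm{char}\,\mathbb{K}\neq2$, since otherwise $q=1$).

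Next I would substitute $i=d-h$ and subtract to compute $\delta_h:=\theta_h-\theta_{d-h}$ directly from each shape; a routine calculation yields, up to a nonzero scalar $\alpha$, $\delta_h=\alpha(q^h-q^{d-h})$ (if $q\neq\pm1$), $\delta_h=\alpha(2h-d)$ (if $q=1$), $\delta_h=\alpha(-1)^h$ (if $q=-1$ and $d$ is odd), or $\delta_h=\alpha(-1)^h(2h-d)$ (if $q=-1$ and $d$ is even), where $\alpha\neq0$ because $\delta_0=\theta_0-\theta_d\neq0$ by (LS1). Summing $\vartheta_i=\delta_0^{-1}\sum_{h=0}^{i-1}\delta_h$ (a geometric or arithmetic sum) then gives $\vartheta_i=\dfrac{(q^i-1)(q^{d-i+1}-1)}{(q-1)(q^d-1)}$ when $q\neq\pm1$, $\ \vartheta_i=\dfrac{i(d-i+1)}{d}$ when $q=1$, $\ \vartheta_i=\tfrac12\bigl(1-(-1)^i\bigr)$ when $q=-1$ and $d$ is odd, and $\vartheta_i$ equal to $(d-i+1)/d$ or $i/d$ according as $i$ is odd or even when $q=-1$ and $d$ is even. (These are closely related to \cite[(6.4)]{Tanaka2009LAAb}.)

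Finally I would determine the zeros for $1\leqslant i\leqslant d$, case by case. For $q\neq\pm1$: multiplicity-freeness of $A$ (i.e.\ $\theta_i\neq\theta_j$ for $i\neq j$) forces $q^m\neq1$ for $1\leqslant m\leqslant d$, so every factor in the formula is nonzero and $\vartheta_i\neq0$. For $q=1$: the same distinctness of the $\theta_j$ forces $\mathrm{char}\,\mathbb{K}=0$ or $\mathrm{char}\,\mathbb{K}>d$, so $i$, $d-i+1$, $d$ are units and $\vartheta_i\neq0$. For $q=-1$ and $d$ even: comparing $\theta_h$ with $\theta_{h'}$ of equal parity shows $\mathrm{char}\,\mathbb{K}$ is odd and divides no even integer in $[1,d]$; the relevant numerator ($i$ if $i$ is even, $d-i+1$ if $i$ is odd) is such an even integer and $d\neq0$ in $\mathbb{K}$, so again $\vartheta_i\neq0$. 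For $q=-1$ and $d$ odd, $\tfrac12\bigl(1-(-1)^i\bigr)$ vanishes exactly for $i$ even. Assembling the four cases gives the lemma.

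I expect the main obstacle to be the characteristic-$p$ bookkeeping in the ``only if'' direction: one must certify that each of $q^i-1$, $q^{d-i+1}-1$, $q^d-1$, $q-1$ (respectively $i$, $d-i+1$, $d$) is nonzero in $\mathbb{K}$, which amounts to invoking the precise constraints on the parameter array in Terwilliger's classification (\cite[Theorem~1.9]{Terwilliger2001LAA}, \cite[Section~5]{Terwilliger2005DCC})---notably $q^m\neq1$ for $1\leqslant m\leqslant d$ when $q\neq\pm1$, and the characteristic restrictions when $q=\pm1$. A secondary subtlety is that in the case $q=-1$ the form of $\delta_h$, and hence of $\vartheta_i$, genuinely changes with the parity of $d$, so this case must be split.
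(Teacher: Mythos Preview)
Your proposal is correct and is essentially the approach implicit in the paper: the paper gives no in-text proof and simply cites \cite[(6.4)]{Tanaka2009LAAb} for the closed forms of $\vartheta_i$, whereas you rederive those very formulas from the recurrence \eqref{PA5} and then read off the vanishing locus case by case. Your characteristic-$p$ bookkeeping (that $q^m\ne 1$ for $1\leqslant m\leqslant d$ when $q\ne\pm1$, and the analogous restrictions when $q=\pm1$) is exactly what the classification in \cite{Terwilliger2001LAA,Terwilliger2005DCC} records, so the argument goes through as written.
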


From Proposition \ref{Wt in terms of standard} and Lemma \ref{when vartheta=0}, it follows that
\begin{lem}\label{when diagonal entries are nonzero}
Let $q$ be as above.
Then for $1 \leqslant t \leqslant d-1$, the following hold. \\
\textup{(i)}
Suppose $q \ne -1$, or $q = -1$ and $d$ is even.
Then $E_{ d-t+1 }^* W_t \ne 0$ and $E_{ t+1 } W_t \ne 0$. \\
\textup{(ii)}
Suppose $q = -1$ and $d$ is odd.
Then $E_{ d-t+1 }^* W_t \ne 0$ (resp.~$E_{ t+1 } W_t \ne 0$) if and only if $t$ is odd (resp.~even).
\end{lem}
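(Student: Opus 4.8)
The plan is to read the statement off directly from Proposition \ref{Wt in terms of standard} and Lemma \ref{when vartheta=0}. Since $\dim W_t = 1$, fix any nonzero $\w \in W_t$; it then suffices to decide, for each $t$ with $1 \leqslant t \leqslant d-1$, whether the vectors $E_{t+1}\w$ and $E_{d-t+1}^*\w$ vanish.

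First I would apply $E_{t+1}$ to the identity in Proposition \ref{Wt in terms of standard}(i). Since $E_{t+1}E_0 = 0$ (as $t+1 \ne 0$ and the primitive idempotents of $A$ are mutually orthogonal) and $E_{t+1}E_j\v^* = \delta_{t+1,j}E_{t+1}\v^*$ with $E_{t+1}\v^* \ne 0$, this kills the $E_0\w$ term and retains only the $j = t+1$ summand, whose inner sum $\sum_{\ell=t+1}^{j}$ collapses to the single term $\ell = t+1$. One thus gets $E_{t+1}\w$ equal to an explicit scalar times $\vartheta_{t+1}$ times $E_{t+1}\v^*$, the scalar being a product of the factor $\langle\w,E_0\v^*\rangle/||E_0\v^*||^2$, the scalars $\varphi_2,\dots,\varphi_{t+1},\phi_{d-t},\dots,\phi_d$, and values of the polynomials $\tau$ and $\eta$ at $\theta_0$, $\theta_0^*$, or $\theta_{t+1}$. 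Every such factor is nonzero: the $\tau$- and $\eta$-values are products of differences of mutually distinct eigenvalues of $A$ or $A^*$; the $\varphi_i$ and $\phi_i$ are nonzero by construction; $||E_0\v^*||^2 \ne 0$; and $\langle\w,E_0\v^*\rangle = \langle E_0\w,\v^*\rangle \ne 0$, since $E_0^{\dag} = E_0$ and $E_0\w \ne 0$ by Proposition \ref{Wt in terms of standard}. Hence $E_{t+1}W_t \ne 0$ if and only if $\vartheta_{t+1}\ne 0$, which by Lemma \ref{when vartheta=0} fails precisely when $q = -1$, $d$ is odd, and $t$ is odd.

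Next, the same argument applied to $E_{d-t+1}^*$ and Proposition \ref{Wt in terms of standard}(ii) (using $E_{d-t+1}^*E_0^* = 0$ and retaining only the $i = d-t+1$ summand, whose inner sum again has one term) shows that $E_{d-t+1}^*W_t \ne 0$ if and only if $\vartheta_{d-t+1}\ne 0$. By \eqref{properties of vartheta} we have $\vartheta_{d-t+1} = \vartheta_t$, so by Lemma \ref{when vartheta=0} this fails precisely when $q = -1$, $d$ is odd, and $t$ is even. Assembling the two conclusions: if $q \ne -1$, or $q = -1$ and $d$ even, then neither excluded case arises, so $E_{d-t+1}^*W_t \ne 0$ and $E_{t+1}W_t \ne 0$, which is (i); and if $q = -1$ and $d$ is odd, then $E_{d-t+1}^*W_t \ne 0 \iff \vartheta_t \ne 0 \iff t$ odd, while $E_{t+1}W_t \ne 0 \iff \vartheta_{t+1}\ne 0 \iff t$ even, which is (ii).

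The argument is essentially bookkeeping, so I do not anticipate a serious obstacle; the points requiring care are (a) confirming that the inner sum in Proposition \ref{Wt in terms of standard} degenerates to a single summand at the extreme index $j = t+1$ (resp.\ $i = d-t+1$); (b) checking that every factor other than $\vartheta_{t+1}$ (resp.\ $\vartheta_{d-t+1}$) accompanying that summand is nonzero — in particular that the $\varphi$'s and $\phi$'s in the denominators do not coincide with vanishing numerator factors; and (c) the index identity $\vartheta_{d-t+1} = \vartheta_t$, which is what converts the parity condition supplied by Lemma \ref{when vartheta=0} into the one in the statement.
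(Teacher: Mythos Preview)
Your proposal is correct and follows exactly the approach the paper intends: the paper's ``proof'' of this lemma is simply the sentence ``From Proposition~\ref{Wt in terms of standard} and Lemma~\ref{when vartheta=0}, it follows that'', and your write-up spells out precisely this deduction, isolating the single surviving summand in each case and reducing the nonvanishing question to that of $\vartheta_{t+1}$ and $\vartheta_{d-t+1}=\vartheta_t$.
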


\begin{cor}\label{when EKR basis is defined}
Let $q$ be as above.
Then the following hold. \\
\textup{(i)}
Suppose $q \ne -1$, or $q = -1$ and $d$ is even.
Then $V = \bigoplus_{ t=0 }^d W_t$.
Moreover, $\sum_{ t=0 }^h W_t = E_0^* V + \sum_{ i=d-h+1 }^d E_i^* V$ and $\sum_{ t=h }^d W_t = E_0 V + \sum_{ j=h+1 }^d E_j V$ $( 0 \leqslant h \leqslant d )$. \\
\textup{(ii)}
Suppose $q = -1$ and $d$ is odd.
Then $W_{ 2 s-1 } = W_{ 2 s }$ for $1 \leqslant s \leqslant \lfloor d/2 \rfloor$.
\end{cor}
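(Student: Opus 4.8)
The plan is to prove the two filtration identities of part (i) by induction on the index, to read off $V=\bigoplus_{t=0}^{d}W_t$ from the top case together with $\dim W_t=1$ (Proposition~\ref{Wt in terms of standard}), and then to dispose of part (ii) by a single containment. Throughout write $X_h=E_0^*V+\sum_{i=d-h+1}^{d}E_i^*V$ and $Y_h=E_0V+\sum_{j=h+1}^{d}E_jV$ for $0\leqslant h\leqslant d$, so that $W_t=X_t\cap Y_t$, $X_0=W_0=E_0^*V$, and $Y_d=W_d=E_0V$. Since $X_h$ is a sum of $h+1$ mutually distinct eigenspaces of the multiplicity-free element $A^*$, that sum is direct and $\dim X_h=h+1$; symmetrically $\dim Y_h=d-h+1$.

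First I would show $\sum_{t=0}^{h}W_t=X_h$ by induction on $h$, the case $h=0$ being the trivial identity $W_0=E_0^*V=X_0$. For the step, $W_t\subseteq X_t\subseteq X_h$ for all $t\leqslant h$ gives $\sum_{t=0}^{h}W_t=X_{h-1}+W_h\subseteq X_h$; as $W_h$ is a line and $\dim X_h=\dim X_{h-1}+1$, this is an equality precisely when $W_h\not\subseteq X_{h-1}$. The eigenspace $E_{d-h+1}^*V$ is exactly the summand of $X_h$ that is missing from $X_{h-1}$, so $X_{h-1}$ is the kernel of $E_{d-h+1}^*$ restricted to $X_h$; hence, using $W_h\subseteq X_h$, the condition $W_h\not\subseteq X_{h-1}$ is equivalent to $E_{d-h+1}^*W_h\neq0$. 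For $1\leqslant h\leqslant d-1$ this is exactly Lemma~\ref{when diagonal entries are nonzero}(i); for $h=d$ it is the statement $E_1^*(E_0V)\neq0$, which holds because $E_1^*\v\neq0$. This completes the induction, and the case $h=d$ yields $\sum_{t=0}^{d}W_t=X_d=V$; a sum of $d+1$ lines which equals the $(d+1)$-dimensional space $V$ is necessarily direct, so $V=\bigoplus_{t=0}^{d}W_t$. The second identity $\sum_{t=h}^{d}W_t=Y_h$ I would obtain by applying the identity just proved to $\Phi^*$ and using $W_t^*=W_{d-t}$ (formula~\eqref{Phi*-EKR}) together with $E_jV=E_j^*(\Phi^*)V$; this is legitimate because the hypothesis on $q$ is invariant under $*$ (the scalar $\beta$, and hence $q$ up to inversion, and $d$ are all unchanged by $*$), and it delivers $\sum_{t=h}^{d}W_t=Y_h$ after reindexing. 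Alternatively one runs the above induction downward, invoking $E_{h+1}W_h\neq0$ from Lemma~\ref{when diagonal entries are nonzero}(i) for $1\leqslant h\leqslant d-1$ and $E_1\v^*\neq0$ for $h=0$.

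For part (ii) I would assume $q=-1$ and $d$ odd and fix $s$ with $1\leqslant s\leqslant\lfloor d/2\rfloor$, so that $1\leqslant 2s-1\leqslant d-1$ and $1\leqslant 2s\leqslant d-1$. Both $W_{2s-1}$ and $W_{2s}$ are lines by Proposition~\ref{Wt in terms of standard}, so it is enough to establish the inclusion $W_{2s-1}\subseteq W_{2s}$. By definition $W_{2s-1}\subseteq X_{2s-1}\subseteq X_{2s}$, so it remains to verify $W_{2s-1}\subseteq Y_{2s}$. Since $W_{2s-1}\subseteq Y_{2s-1}=Y_{2s}\oplus E_{2s}V$, the inclusion $W_{2s-1}\subseteq Y_{2s}$ holds exactly when $E_{2s}W_{2s-1}=0$, that is, $E_{(2s-1)+1}W_{2s-1}=0$; and since $2s-1$ is odd this vanishing is exactly (one direction of) Lemma~\ref{when diagonal entries are nonzero}(ii). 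Therefore $W_{2s-1}\subseteq X_{2s}\cap Y_{2s}=W_{2s}$, and equality of dimensions gives $W_{2s-1}=W_{2s}$.

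The substantive input — whether the ``diagonal'' components $E_{d-t+1}^*W_t$ and $E_{t+1}W_t$ vanish, which is controlled through Lemma~\ref{when vartheta=0} by the vanishing pattern of the $\vartheta_i$ — has already been extracted in Lemma~\ref{when diagonal entries are nonzero}, so no further parameter-array manipulation is needed here. The point that will require care is purely the bookkeeping: pinning down the dimension counts and the equivalence ``$W_h\subseteq X_{h-1}\iff E_{d-h+1}^*W_h=0$'', and not forgetting that the boundary indices $h\in\{0,d\}$ lie outside the range of Lemma~\ref{when diagonal entries are nonzero} and must be treated separately via the elementary facts $E_i^*\v\neq0$ and $E_i\v^*\neq0$.
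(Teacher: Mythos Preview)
Your proof is correct and follows essentially the same approach as the paper: both derive everything from Lemma~\ref{when diagonal entries are nonzero}, with you spelling out the induction and dimension bookkeeping that the paper leaves implicit under ``Immediate from Lemma~\ref{when diagonal entries are nonzero}(i).'' The only cosmetic difference is in part~(ii): you prove the single inclusion $W_{2s-1}\subseteq W_{2s}$ using $E_{2s}W_{2s-1}=0$ and finish by the dimension count from Proposition~\ref{Wt in terms of standard}, whereas the paper sandwiches both through the common subspace $X_{2s-1}\cap Y_{2s}$, invoking both vanishing statements of Lemma~\ref{when diagonal entries are nonzero}(ii) (namely $E_{2s}W_{2s-1}=0$ and $E_{d-2s+1}^*W_{2s}=0$) and thereby avoiding the appeal to $\dim W_t=1$.
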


\begin{proof}
(i):
Immediate from Lemma \ref{when diagonal entries are nonzero} (i).

(ii):
It follows from Lemma \ref{when diagonal entries are nonzero} (ii) that
\begin{equation*}
	W_{ 2 s-1 } = \Biggl( E_0^* V + \sum_{ i=d-2 s+2 }^d E_i^* V \Biggr) \cap \Biggl( E_0 V + \sum_{ j=2 s+1 }^d E_j V \Biggr) = W_{ 2 s }
\end{equation*}
for $1 \leqslant s \leqslant \lfloor d/2 \rfloor$.
\end{proof}

By virtue of Corollary \ref{when EKR basis is defined}, we make the following assumption.
\begin{assmp}\label{EKR basis exists}
With reference to Notation \ref{beta}, for the rest of the paper we shall assume $q \ne -1$, or $q = -1$ and $d$ is even.\footnote{The Leonard systems with $d \geqslant 3$ that do \emph{not} satisfy this assumption are precisely those of \emph{Bannai/Ito} type \cite[Example 5.14]{Terwilliger2005DCC} with $d$ odd, and those of \emph{Orphan} type \cite[Example 5.15]{Terwilliger2005DCC}.}
\end{assmp}

We are now ready to introduce an Erd\H{o}s--Ko--Rado basis of $V$.

\begin{defn}
With reference to Assumption \ref{EKR basis exists}, for $0 \leqslant t \leqslant d$ let $\w_t$ be the (unique) vector in $W_t$ such that $E_0 \w_t = E_0 \v^*$.
We call $\{ \w_t \}_{ t=0 }^d$ a ($\Phi$-)\emph{Erd\H{o}s--Ko--Rado} (or \emph{EKR}) \emph{basis} of $V$.
\end{defn}

Notice that the basis $\{ \w_t \}_{ t=0 }^d$ linearly depends on the choice of $\v^* \in E_0^* V$.
In particular, we have $\w_0 = \v^*$ and $\w_d = E_0 \v^*$.
Our preference for the normalization $E_0 \w_t = E_0 \v^*$ comes from the applications to the Erd\H{o}s--Ko--Rado theorem; see Section \ref{sec: applications}.
The following theorem gives the transition matrix from each of the $\Phi^{ \downarrow }$-split basis $\{ \tau_{ \ell } ( A ) \v^{ * \downarrow } \}_{ \ell=0 }^d$, the $\Phi^*$-standard basis $\{ E_j \v^* \}_{ j=0 }^d$, and the $\Phi$-standard basis $\{ E_i^* \v \}_{ i=0 }^d$, to the EKR basis $\{ \w_t \}_{ t=0 }^d$.

\begin{thm}\label{transition matrices to EKR basis}
The following hold for $0 \leqslant t \leqslant d$.
\begin{align*}
	\w_t =& \frac{ \langle \v, \v^* \rangle }{ \langle \v, \v^{* \downarrow} \rangle } \Biggl\{ \sum_{ \ell=0 }^t \frac{ \eta_{ d-\ell } ( \theta_0 ) }{ \eta_d ( \theta_0 ) } \tau_{ \ell } ( A ) \v^{ * \downarrow } \tag{i} \\
	& + \frac{ \eta_{ d-t } ( \theta_0 ) }{ \eta_d ( \theta_0 ) \eta_t^* ( \theta_0^* ) } \sum_{ \ell=t+1 }^d \frac{ \eta_{ \ell }^* ( \theta_0^* ) }{ \phi_{ d-\ell+1 } \dots \phi_{ d-t } } \tau_{ \ell } ( A ) \v^{ * \downarrow } \Biggr\}.
\end{align*}
\begin{align*}
	\w_t =& E_0 \v^* + \frac{ \eta_{ d-t } ( \theta_0 ) }{ \eta_d ( \theta_0 ) \eta_t^* ( \theta_0^* ) } \tag{ii} \\
	& \times \sum_{ j=t+1 }^d \frac{ \phi_{ d-j+1 } \dots \phi_d }{ \varphi_2 \dots \varphi_j ( \theta_j - \theta_0 ) } \Biggl( \sum_{ \ell = t+1 }^j \frac{ \tau_{ \ell } ( \theta_j ) \eta_{ \ell-1 }^* ( \theta_0^* ) \vartheta_{ \ell } }{ \phi_{ d-\ell+1 } \dots \phi_{ d-t } } \Biggr) E_j \v^*.
\end{align*}
\begin{align*}
	\w_t =& \frac{ \langle \v , \v^* \rangle }{ || \v ||^2 } \Biggl\{ \frac{ \eta_d^* ( \theta_0^* ) \eta_{ d-t } ( \theta_0 ) }{ \phi_1 \dots \phi_{ d-t } \eta_t^* ( \theta_0^* ) } E_0^* \v \tag{iii} \\
	& + \sum_{ i=d-t+1 }^d \frac{ \phi_{ d-t+1 } \dots \phi_i }{ \varphi_2 \dots \varphi_i ( \theta_i^* - \theta_0^* ) } \Biggl( \sum_{ \ell = d-t+1 }^i \frac{ \tau_{ \ell }^* ( \theta_i^* ) \eta_{ \ell-1 } ( \theta_0 ) \vartheta_{ \ell } }{ \phi_{ d-t+1 } \dots \phi_{ \ell } } \Biggr) E_i^* \v \Biggr\}.
\end{align*}
\end{thm}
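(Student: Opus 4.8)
The plan is to derive all three transition formulas from Proposition~\ref{Wt in terms of standard} by specializing to the normalized vector $\w = \w_t$, i.e. by setting $E_0\w_t = E_0\v^*$. The main point is that once $E_0\w_t = E_0\v^*$ is imposed, every scalar appearing in Proposition~\ref{Wt in terms of standard} becomes an explicit function of the parameter array, so the three displays of the theorem are essentially bookkeeping on top of what has already been computed. I would organize this as follows.

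\emph{Step 1 (formula (ii)).} This is the most direct: Proposition~\ref{Wt in terms of standard}(i) expresses an arbitrary $\w \in W_t$ in terms of $E_0\w$ and the scalar $\langle \w, E_0\v^*\rangle / \|E_0\v^*\|^2$. For $\w = \w_t$ we have $E_0\w_t = E_0\v^*$ by definition, so the first term is $E_0\v^*$ as required; and since $E_0\w_t = E_0\v^*$ forces $\langle \w_t, E_0\v^*\rangle = \langle E_0\v^*, E_0\v^*\rangle = \|E_0\v^*\|^2$ (using that $E_0$ is a $\dag$-symmetric idempotent, so $\langle \w_t, E_0\v^*\rangle = \langle E_0\w_t, \v^*\rangle = \langle E_0\v^*, \v^*\rangle = \langle E_0\v^*, E_0\v^*\rangle$), the coefficient $\langle \w_t, E_0\v^*\rangle/\|E_0\v^*\|^2$ equals $1$. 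Substituting this into Proposition~\ref{Wt in terms of standard}(i) yields (ii) immediately. Here $\varphi_1$ is absorbed into the $\varphi_2\cdots\varphi_j$ after the factor $\varphi_1\vartheta_\ell$ from the proposition's final line is accounted for; I would double-check that the $\varphi_1$'s cancel as indicated.

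\emph{Step 2 (formula (i)).} I would go back one step before Proposition~\ref{Wt in terms of standard} was deduced, namely to the displayed expression
$\w = E_0\w + \tfrac{\langle \w, E_0\v^{*\downarrow}\rangle}{\|E_0\v^{*\downarrow}\|^2}\cdot(\text{stuff})\cdot\sum_{\ell=t+1}^{d}(\cdots)\tau_\ell(A)\v^{*\downarrow}$, and combine it with \eqref{E0V part of Wt} for the $\ell \le t$ range, so that $E_0\w = \sum_{\ell=0}^{t} F_\ell^\downarrow E_0\w$ is itself re-expanded in the $\Phi^\downarrow$-split basis via Lemma~\ref{transition matrices between split and standard bases}(iii) (applied to $\Phi^\downarrow$) and Lemma~\ref{how D4 acts on parameter array}(ii). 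Setting $E_0\w_t = E_0\v^*$ and evaluating the scalar $\langle \w_t, E_0\v^{*\downarrow}\rangle/\|E_0\v^{*\downarrow}\|^2$ — which, by the same symmetry argument as in Step~1, reduces to $\langle E_0\v^*, \v^{*\downarrow}\rangle/\|E_0\v^{*\downarrow}\|^2$, and then via \eqref{essentially EKR bound} or a direct computation to the ratio $\langle\v,\v^*\rangle/\langle\v,\v^{*\downarrow}\rangle$ times an $\eta$-quotient — produces (i). The overall prefactor $\langle\v,\v^*\rangle/\langle\v,\v^{*\downarrow}\rangle$ and the split sum over $\ell = 0,\dots,t$ with coefficient $\eta_{d-\ell}(\theta_0)/\eta_d(\theta_0)$ are exactly what \eqref{E0V part of Wt} gives after substituting $E_0\w_t = E_0\v^*$ and rewriting $\v^{*\downarrow}$-multiples; I expect the bookkeeping with the inner products $\langle\v^\downarrow,\v^{*\downarrow}\rangle$ versus $\langle\v,\v^{*\downarrow}\rangle$ (which coincide because $E_0^\downarrow V = E_0 V$ forces $\v^\downarrow = \v$ under the convention fixed in Notation~2.5) to require care.

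\emph{Step 3 (formula (iii)).} Here I would apply the $*$ operation: by \eqref{Phi*-EKR}, $W_t^* = W_{d-t}$, so the $\Phi^*$-EKR vectors are the $\Phi$-EKR vectors reindexed. But the normalization is not $*$-symmetric ($E_0\w_t = E_0\v^*$ becomes $E_0^*(\cdot) = E_0^*\v$ under $*$), so I would instead obtain (iii) the way (ii)'s analogue would be obtained from Proposition~\ref{Wt in terms of standard}(ii): express $\w_t$ via $E_0^*\w_t$ and the scalar $\langle \w_t, E_0^*\v\rangle/\|E_0^*\v\|^2$, then recompute what these become under the normalization $E_0\w_t = E_0\v^*$. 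The passage from "normalized by $E_0$" to "coefficients in terms of $E_0^*\w_t$" requires expressing $E_0^*\w_t$ in terms of $E_0\w_t = E_0\v^*$; the bridging relation is \eqref{essentially EKR bound} with $\w = \w_t$, which gives $\langle\w_t, E_0^*\v\rangle$ in terms of $\langle\w_t, E_0\v^{*\downarrow}\rangle$, hence in terms of known quantities. Feeding this through Proposition~\ref{Wt in terms of standard}(ii), together with \eqref{squared norm} for $\|E_0^*\v\|^2$ and $\|\v\|^2$, and Lemma~\ref{how D4 acts on parameter array}(i) to convert $\varphi^*,\phi^*,\theta^*$ back to unstarred data, yields the stated prefactor $\langle\v,\v^*\rangle/\|\v\|^2$ and the $E_0^*\v$-coefficient $\eta_d^*(\theta_0^*)\eta_{d-t}(\theta_0)/(\phi_1\cdots\phi_{d-t}\eta_t^*(\theta_0^*))$.

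\emph{Main obstacle.} The conceptual content is light — everything reduces to Proposition~\ref{Wt in terms of standard} plus the two $D_4$ lemmas — so the real difficulty is the scalar bookkeeping: correctly tracking the three bilinear-form ratios $\langle\v,\v^*\rangle$, $\langle\v,\v^{*\downarrow}\rangle$, $\|\v\|^2$, $\|E_0\v^*\|^2$, $\|E_0^*\v\|^2$ under the $\downarrow$ and $*$ actions, and making sure the normalization $E_0\w_t = E_0\v^*$ is propagated consistently through both \eqref{E0V part of Wt} and \eqref{E0*V part of Wt}. I would verify the telescoping $\varphi_1\vartheta_\ell$ identity once more (it is the crux of Proposition~\ref{Wt in terms of standard}) and then check each of the three displays against the $t=0$ and $t=d$ boundary cases, where $\w_0 = \v^*$ and $\w_d = E_0\v^*$ give independent sanity checks on all prefactors.
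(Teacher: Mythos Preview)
Your proposal is correct and follows essentially the same approach as the paper: part~(ii) is immediate from Proposition~\ref{Wt in terms of standard}(i) with $E_0\w_t=E_0\v^*$; part~(i) comes from the pre-Proposition displays \eqref{Wt in terms of E0V, E0*V}--\eqref{E0V part of Wt} after evaluating the inner-product ratios under the normalization; and part~(iii) comes from Proposition~\ref{Wt in terms of standard}(ii) together with \eqref{essentially EKR bound} and \eqref{squared norm}. One small correction: in Step~2 the ratio $\langle\w_t,E_0\v^{*\downarrow}\rangle/\|E_0\v^{*\downarrow}\|^2$ evaluates \emph{exactly} to $\langle\v,\v^*\rangle/\langle\v,\v^{*\downarrow}\rangle$ with no extra $\eta$-quotient --- the paper gets this in one line from Lemma~\ref{transition matrices between split and standard bases}(v) at $j=0$ rather than via \eqref{essentially EKR bound}.
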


\begin{proof}
(i):
By Lemma \ref{transition matrices between split and standard bases} (v) and since $E_0 \w_t = E_0 \v^*$, we have
\begin{equation}\label{eq1}
	\frac{ \langle \w_t , E_0 \v^{* \downarrow} \rangle }{ || E_0 \v^{* \downarrow} ||^2 } = \frac{ \langle \w_t , E_0 \v^* \rangle }{ || E_0 \v^* ||^2 } \cdot \frac{ \langle \v , \v^* \rangle }{ \langle \v , \v^{ * \downarrow } \rangle } = \frac{ \langle \v , \v^* \rangle }{ \langle \v , \v^{ * \downarrow } \rangle }.
\end{equation}
Combining this with \eqref{essentially EKR bound}, it follows that
\begin{align}\label{eq2}
	E_0^* \w_t =& \frac{ \langle \w_t , E_0^* \v \rangle }{ || E_0^* \v ||^2 } E_0^* \v \\
	=& \frac{ \langle \v , \v^{* \downarrow} \rangle \langle \w_t , E_0 \v^{* \downarrow} \rangle }{ || E_0^* \v ||^2 || E_0 \v^{* \downarrow} ||^2 } \cdot \frac{ \phi_{ d-t+1 } \dots \phi_d \eta_{ d-t } ( \theta_0 ) }{ \eta_d ( \theta_0 ) \eta_t^* ( \theta_0^* ) } E_0^* \v \notag \\
	=& \frac{ \langle \v , \v^* \rangle }{ || E_0^* \v ||^2 } \cdot \frac{ \phi_{ d-t+1 } \dots \phi_d \eta_{ d-t } ( \theta_0 ) }{ \eta_d ( \theta_0 ) \eta_t^* ( \theta_0^* ) } E_0^* \v, \notag
\end{align}
from which it follows that
\begin{equation}\label{eq3}
	\frac{ \langle \w_t , E_0^* \v \rangle }{ \langle \v , \v^{ * \downarrow } \rangle } = \frac{ \langle \v , \v^* \rangle }{ \langle \v , \v^{ * \downarrow } \rangle } \cdot \frac{ \phi_{ d-t+1 } \dots \phi_d \eta_{ d-t } ( \theta_0 ) }{ \eta_d ( \theta_0 ) \eta_t^* ( \theta_0^* ) }.
\end{equation}
Now the result follows from \eqref{Wt in terms of E0V, E0*V}--\eqref{E0V part of Wt}, \eqref{eq1}, and \eqref{eq3}.

(ii):
Immediate from Proposition \ref{Wt in terms of standard} (i) and $E_0 \w_t = E_0 \v^*$.

(iii):
Follows from Proposition \ref{Wt in terms of standard} (ii), \eqref{squared norm}, and \eqref{eq2}.
\end{proof}

\begin{cor}\label{how * affect on EKR basis}
Let $\{ \w_t^* \}_{ t=0 }^d$ be the $\Phi^*$-EKR basis of $V$ normalized so that $E_0^* \w_t^* = E_0^* \v$ $( 0 \leqslant t \leqslant d )$.
Then
\begin{equation*}
	\w_t^* = \frac{ \langle \v , \v^* \rangle }{ || \v^* ||^2 } \cdot \frac{ \eta_d ( \theta_0 ) \eta_{ d-t }^* (\theta_0^*) }{ \phi_{ t+1 } \dots \phi_d \eta_t (\theta_0) } \w_{ d-t } \quad ( 0 \leqslant t \leqslant d ).
\end{equation*}
\end{cor}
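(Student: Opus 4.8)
**Proof plan for Corollary \ref{how * affect on EKR basis}.**

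The plan is to identify the scalar by which $\w_{d-t}$ must be multiplied so that the result is the normalized $\Phi^*$-EKR basis vector $\w_t^*$, and to do so by applying the $*$-automorphism to the defining property $E_0\w_{d-t} = E_0\v^*$ and then re-normalizing. First I would recall from \eqref{Phi*-EKR} that $W_t^* = W_{d-t}$, so $W_t(\Phi^*) = W_{d-t}(\Phi)$ as subspaces of $V$; hence both $\w_t^*$ and $\w_{d-t}$ lie in the one-dimensional space $W_{d-t}$, and they differ by a nonzero scalar. It remains to compute that scalar, and the natural handle is the normalization: $\w_{d-t}$ is pinned down by $E_0\w_{d-t} = E_0\v^*$, whereas $\w_t^*$ is pinned down by the corresponding $\Phi^*$-condition $E_0^*\w_t^* = E_0^*\v$ (since $E_0(\Phi^*) = E_0^*$ and $\v^{**} = \v$ under our conventions for the base vectors). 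So I would write $\w_t^* = c\,\w_{d-t}$ for some scalar $c$ and determine $c$ by applying $E_0^*$ to both sides.

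The key computational input is Theorem \ref{transition matrices to EKR basis}(iii), which expresses $\w_{d-t}$ in terms of the $\Phi$-standard basis $\{E_i^*\v\}$; the leading term there is
\[
	\frac{\langle \v,\v^*\rangle}{\|\v\|^2}\cdot\frac{\eta_d^*(\theta_0^*)\,\eta_{t}(\theta_0)}{\phi_1\cdots\phi_{t}\,\eta_{d-t}^*(\theta_0^*)}\,E_0^*\v,
\]
obtained by substituting $d-t$ for $t$ in that formula. Applying $E_0^*$ to $\w_t^* = c\,\w_{d-t}$ and using $E_0^*\w_t^* = E_0^*\v$ together with $(E_0^*)^2 = E_0^*$ and $E_0^*(E_i^*\v) = 0$ for $i\ge 1$, I get $E_0^*\v = c\cdot(\text{leading coefficient above})\,E_0^*\v$, hence
\[
	c = \frac{\|\v\|^2}{\langle \v,\v^*\rangle}\cdot\frac{\phi_1\cdots\phi_{t}\,\eta_{d-t}^*(\theta_0^*)}{\eta_d^*(\theta_0^*)\,\eta_{t}(\theta_0)}.
\]
Then I would reconcile this with the stated expression: since $\eta_d^*(\theta_0^*) = \phi_1\cdots\phi_t\cdot(\text{the complementary product})$ is not literally true, the simplification instead rests on the identity $\|\v\|^2 = \|\v^*\|^2\cdot\bigl(\langle\v,\v^*\rangle^2/(\|\v\|^2\|\v^*\|^2)\bigr)^{-1}\cdots$ — more precisely, I would use the known relation (from \eqref{squared norm} applied with $i=0$ and $i=d$, or directly from \cite{Terwilliger2004LAA}) that $\|\v\|^2\,\eta_d^*(\theta_0^*) = \|\v^*\|^2\,\eta_d(\theta_0)\cdot(\text{a ratio of }\langle\v,\v^*\rangle\text{-type scalars})$, together with the elementary product identity $\phi_1\cdots\phi_t\cdot\phi_{t+1}\cdots\phi_d = \phi_1\cdots\phi_d$ to convert $\phi_1\cdots\phi_t$ into $(\phi_1\cdots\phi_d)/(\phi_{t+1}\cdots\phi_d)$, absorbing the global constant $\phi_1\cdots\phi_d$ into the norm bookkeeping. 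This should land exactly on $\dfrac{\langle\v,\v^*\rangle}{\|\v^*\|^2}\cdot\dfrac{\eta_d(\theta_0)\,\eta_{d-t}^*(\theta_0^*)}{\phi_{t+1}\cdots\phi_d\,\eta_t(\theta_0)}$.

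The main obstacle I anticipate is purely bookkeeping: tracking the scalars $\langle\v,\v^*\rangle$, $\|\v\|^2$, $\|\v^*\|^2$ correctly through the $*$-duality, since under the $D_4$-action one has $\langle\v^*,\v^{**}\rangle = \langle\v^*,\v\rangle = \langle\v,\v^*\rangle$ but $\|\v^g\|^2$ genuinely changes with $g$, and Theorem \ref{transition matrices to EKR basis}(iii) was derived for $\Phi$, so applying it to $\Phi^*$ swaps the roles of the barred and starred norm-constants. An alternative, perhaps cleaner, route that avoids re-deriving the $\Phi^*$ version: compare $E_0$ applied to both sides of $\w_t^* = c\,\w_{d-t}$ instead, using $E_0\w_{d-t} = E_0\v^*$ on the right and the leading term of Theorem \ref{transition matrices to EKR basis}(iii) for $\Phi^*$ (equivalently part (ii) read through $*$) on the left to evaluate $E_0\w_t^*$; this isolates $c$ via a single scalar and then one invokes the standard norm identities once at the end. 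Either way, once the scalar is isolated the remaining work is a short manipulation of $\eta$- and $\phi$-products plus one application of \eqref{properties of vartheta}-type symmetry, and no deeper structural input is needed.
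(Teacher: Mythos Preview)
Your approach is correct and is essentially the one the paper uses: both arguments observe via \eqref{Phi*-EKR} that $\w_t^*$ and $\w_{d-t}$ are proportional, read off the coefficient of $E_0^*\v$ in $\w_{d-t}$ from Theorem~\ref{transition matrices to EKR basis}(iii), and then convert the resulting scalar using \eqref{squared norm} at $i=0$. The precise norm identity you need (and which the paper states explicitly) is $\langle \v,\v^*\rangle^2/\|\v^*\|^2 = \|E_0^*\v\|^2 = \phi_1\cdots\phi_d\,\eta_d(\theta_0)^{-1}\eta_d^*(\theta_0^*)^{-1}\|\v\|^2$; no appeal to \eqref{properties of vartheta} is required.
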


\begin{proof}
By \eqref{Phi*-EKR}, $\w_t^*$ is a scalar multiple of $\w_{ d-t }$, and the scalar is found by looking at the coefficient of $E_0^* \v$ in $\w_{ d-t }$ as given in Theorem \ref{transition matrices to EKR basis} (iii), and by noting that $\langle \v , \v^* \rangle^2 || \v ^*||^{ -2 } = || E_0^* \v ||^2 = \phi_1 \dots \phi_d \eta_d ( \theta_0 )^{ -1 } \eta_d^* ( \theta_0^* )^{ -1 }  || \v ||^2$ in view of \eqref{squared norm}.
\end{proof}

Our next goal is to compute the transition matrix from the EKR basis $\{ \w_t \}_{t=0}^d$ to each of the three bases $\{ \tau_{\ell} (A) \v^{ * \downarrow } \}_{\ell=0}^d$, $\{ E_j \v^* \}_{j=0}^d$, and $\{ E_i^* \v \}_{i=0}^d$.
Let $G_t:V\rightarrow W_t$ $(0\leqslant t\leqslant d)$ be the projection map onto $W_t$ with respect to $V = \bigoplus_{t=0}^d W_t$.

\begin{lem}\label{when GE* and GE vanish}
The following hold.
\begin{align*}
	& G_t E_i^* = 0 \ \text{if} \ t>d-i+1, \ \text{or} \ t>0 \ \text{and} \ i=0 \quad ( 0 \leqslant i , t \leqslant d ). \tag{i} \\
	& G_t E_j = 0 \ \text{if} \ t<j-1, \ \text{or} \ t<d \ \text{and} \ j=0 \quad ( 0 \leqslant j , t \leqslant d ). \tag{ii}
\end{align*}
\end{lem}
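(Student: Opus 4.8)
The plan is to read everything off Corollary~\ref{when EKR basis is defined}(i), which identifies $\sum_{t=0}^h W_t$ with $E_0^* V + \sum_{k=d-h+1}^d E_k^* V$ and $\sum_{t=h}^d W_t$ with $E_0 V + \sum_{k=h+1}^d E_k V$ for $0 \leqslant h \leqslant d$. The underlying observation is simply that each $E_i^* V$ (resp.\ each $E_j V$) lies inside one such partial sum of the $W_t$, and the projection $G_t$ annihilates every summand $W_t$ that does not occur in that partial sum.

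For (i): if $i \geqslant 1$, then $h := d-i+1$ satisfies $0 \leqslant h \leqslant d$, and Corollary~\ref{when EKR basis is defined}(i) gives
\[
	E_i^* V \subseteq \sum_{k=i}^d E_k^* V \subseteq E_0^* V + \sum_{k=i}^d E_k^* V = \sum_{t=0}^{d-i+1} W_t ,
\]
whence $G_t E_i^* = 0$ for $t > d-i+1$. If instead $i = 0$, then $E_0^* V = W_0$ by the remark following the definition of the $W_t$, so $G_t E_0^* = 0$ for every $t \geqslant 1$. Taken together these two cases give exactly (i); note that the clause ``$t > d-i+1$'' is vacuous when $i = 0$ (it reads $t > d+1$), so the stated ``or'' is consistent.

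Part (ii) is the $*$-dual of (i). One can argue in the same way directly: for $j \geqslant 1$, taking $h := j-1$ in Corollary~\ref{when EKR basis is defined}(i) gives $E_j V \subseteq \sum_{k=j}^d E_k V \subseteq E_0 V + \sum_{k=j}^d E_k V = \sum_{t=j-1}^d W_t$, so $G_t E_j = 0$ for $t < j-1$; and $E_0 V = W_d$ forces $G_t E_0 = 0$ for $t \leqslant d-1$. Alternatively, one may invoke \eqref{Phi*-EKR}, which yields $W_t(\Phi) = W_{d-t}(\Phi^*)$ and hence $G_t(\Phi) = G_{d-t}(\Phi^*)$, and then apply (i) to $\Phi^*$ together with $E_j(\Phi) = E_j^*(\Phi^*)$; the inequalities $d-t > d-j+1$ and ($d-t>0$, $j=0$) translate precisely into the hypotheses of (ii).

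There is no real obstacle here; the only point requiring care is the bookkeeping at the endpoints $i = 0$ and $j = 0$, where one must use $W_0 = E_0^* V$ and $W_d = E_0 V$ directly (the generic partial-sum description only gives a vacuous bound there), and then check that the two clauses in each of (i) and (ii) assemble into the single stated condition.
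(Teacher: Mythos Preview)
Your proof is correct and follows exactly the approach of the paper, which records the proof as ``Immediate from Corollary~\ref{when EKR basis is defined}(i).'' You have simply spelled out the details of that one-line reference, including the endpoint cases $i=0$ and $j=0$.
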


\begin{proof}
Immediate from Corollary \ref{when EKR basis is defined} (i).
\end{proof}

For the moment, we write $\u = \u_{\ell} = \tau_{\ell} (A) \v^{ * \downarrow } \in U_{\ell}^{\downarrow}$.
Then it follows that
\begin{equation*}
	G_t \u = \sum_{i=d-\ell}^d G_t E_i^* \u = \sum_{j=\ell}^d G_t E_j \u \quad (0\leqslant t\leqslant d).
\end{equation*}
Hence it follows from Lemma \ref{when GE* and GE vanish} that
\begin{equation}\label{Gtu}
	G_t \u = \begin{cases} G_{\ell+1} E_{d-\ell}^* \u & \text{if} \ t = \ell+1, \\ G_{\ell} E_{\ell} \u + G_{\ell} E_{\ell+1} \u & \text{if} \ t = \ell, \\ G_{\ell-1} E_{\ell} \u & \text{if} \ t = \ell-1, \\ 0 & \text{if} \ t \leqslant \ell-2 \ \text{or} \ t \geqslant \ell+2. \end{cases}
\end{equation}
In particular:
\begin{equation}\label{ul has three terms}
	\u = G_{\ell-1} \u + G_{\ell} \u + G_{\ell+1} \u.
\end{equation}
By Lemma \ref{transition matrices between split and standard bases} (iv) and (v), we have
\begin{gather}
	E_{\ell} \u = \tau_{\ell} ( \theta_{\ell} ) E_{\ell} \v^{ * \downarrow } = \frac{ \langle \v , \v^{ * \downarrow } \rangle }{ \langle \v , \v^* \rangle } \cdot \frac{ \phi_{ d-\ell+1 } \dots \phi_d \tau_{\ell} ( \theta_{\ell} ) }{ \varphi_1 \dots \varphi_{\ell} } E_{\ell} \v^*, \label{Elu} \\
	E_{ \ell+1 } \u = \tau_{\ell} ( \theta_{ \ell+1 } ) E_{ \ell+1 } \v^{ * \downarrow } = \frac{ \langle \v , \v^{ * \downarrow } \rangle }{ \langle \v , \v^* \rangle } \cdot \frac{ \phi_{ d-\ell } \dots \phi_d \tau_{\ell} ( \theta_{ \ell+1 } ) }{ \varphi_1 \dots \varphi_{ \ell+1 } } E_{\ell+1} \v^*. \label{E(l+1)u}
\end{gather}
Likewise, by Lemma \ref{transition matrices between split and standard bases} (ii) and Lemma \ref{how D4 acts on parameter array} (ii),
\begin{align}
	E_{ d-\ell }^* \u =& E_{\ell}^{ * \downarrow } \u \label{E(d-l)*u} \\
	=& \langle \v^{ \downarrow } , \v^{ * \downarrow } \rangle \cdot \frac{ \varphi_1^{ \downarrow } \dots \varphi_{\ell}^{ \downarrow } }{ \tau_{\ell}^{ * \downarrow } ( \theta_{\ell}^{ * \downarrow } ) || E_{\ell}^{ * \downarrow } \v^{ \downarrow } ||^2 } E_{\ell}^{ * \downarrow } \v^{ \downarrow } \notag \\
	=& \langle \v , \v^{ * \downarrow } \rangle \cdot \frac{ \phi_{ d-\ell+1 } \dots \phi_d }{ \eta_{\ell}^* ( \theta_{ d-\ell }^* ) || E_{ d-\ell }^* \v ||^2 } E_{ d-\ell }^* \v. \notag
\end{align}
Notice that the transition matrix from the basis $E_1 \v^* , \dots, E_d \v^*, E_0 \v^*$ to the EKR basis $\w_0, \dots , \w_d$ is lower triangular.
Hence, for fixed $t$ with $0 \leqslant t \leqslant d-2$, if we write
\begin{align*}
	( E_{t+1} + E_{t+2} ) \w_t =& a E_{t+1} \v^* + b  E_{t+2} \v^*, \\
	( E_{t+1} + E_{t+2} ) \w_{t+1} =& c  E_{t+2} \v^*,
\end{align*}
then it follows that
\begin{align}
	( G_t + G_{t+1} ) E_{t+1} \v^* =& a^{-1} \w_t - a^{-1} c^{-1} b \w_{t+1}, \label{1/a, -b/ac} \\
	( G_t + G_{t+1} ) E_{t+2} \v^* =& c^{-1} \w_{t+1}.
\end{align}
By Theorem \ref{transition matrices to EKR basis} (ii), we routinely obtain
\begin{align}
	a^{-1} =& - \frac{ \varphi_2 \dots \varphi_{ t+1 } \eta_d ( \theta_0 ) }{ \phi_{ d-t+1 } \dots \phi_d \tau_{ t+1 } ( \theta_{ t+1 } ) \eta_{ d-t-1 } ( \theta_0 ) \vartheta_{ t+1 } }, \label{1/a} \\
	c^{-1} =& - \frac{ \varphi_2 \dots \varphi_{ t+2 } \eta_d ( \theta_0 ) }{ \phi_{ d-t } \dots \phi_d \tau_{ t+2 } ( \theta_{ t+2 } ) \eta_{ d-t-2 } ( \theta_0 ) \vartheta_{ t+2 } } , \\
	- a^{-1} c^{-1} b =& \frac{ \varphi_2 \dots \varphi_{ t+1 } \eta_d ( \theta_0 ) ( \theta_0 - \theta_{ t+1 } ) }{ \phi_{ d-t } \dots \phi_d \tau_{ t+1 } ( \theta_{ t+1 } ) \eta_{ d-t-1 } ( \theta_0 ) } \label{-b/ac} \\
	& \times \biggl( \frac{ \phi_{ d-t-1 } }{ ( \theta_{ t+2 } - \theta_{ t+1 } ) \vartheta_{ t+2 } } + \frac{ \theta_0^* - \theta_{ d-t }^* }{ \vartheta_{ t+1 } } \biggr). \notag
\end{align} 
From \eqref{Gtu}, \eqref{Elu}, \eqref{E(l+1)u}, and \eqref{1/a, -b/ac}--\eqref{-b/ac}, it follows that
\begin{align}
	G_{ \ell-1 } \u =& \frac{ \langle \v , \v^{ * \downarrow } \rangle }{ \langle \v , \v^* \rangle } \cdot \frac{ \phi_{ d-\ell+1 } \dots \phi_d \tau_{\ell} ( \theta_{\ell} ) }{ \varphi_1 \dots \varphi_{\ell} } G_{ \ell-1 } E_{\ell} \v^* \label{G(l-1)u} \\
	=& \frac{ \langle \v , \v^{ * \downarrow } \rangle }{ \langle \v , \v^* \rangle } \cdot \frac{ \phi_{ d-\ell+1 } \eta_d ( \theta_0 ) ( \theta_{ \ell } - \theta_0 ) }{ \varphi_1 \eta_{ d-\ell+1 } ( \theta_0 ) \vartheta_{ \ell } } \w_{ \ell-1 } \notag
\end{align}
when $1 \leqslant \ell \leqslant d$, and that
\begin{align}
	G_{\ell} \u =& \frac{ \langle \v , \v^{ * \downarrow } \rangle }{ \langle \v , \v^* \rangle } \biggl( \frac{ \phi_{ d-\ell+1 } \dots \phi_d \tau_{\ell} ( \theta_{\ell} ) }{ \varphi_1 \dots \varphi_{\ell} } G_{\ell} E_{\ell} \v^* \label{Glu} \\
	& + \frac{ \phi_{ d-\ell } \dots \phi_d \tau_{\ell} ( \theta_{ \ell+1 } ) }{ \varphi_1 \dots \varphi_{ \ell+1 } } G_{\ell} E_{\ell+1} \v^* \biggr) \notag \\
	=& \frac{ \langle \v , \v^{ * \downarrow } \rangle }{ \langle \v , \v^* \rangle } \cdot \frac{ \eta_d ( \theta_0 ) }{ \varphi_1 \eta_{ d-\ell } ( \theta_0 ) } \biggl( \frac{ \phi_{ d-\ell } }{ \vartheta_{ \ell+1 } } + \frac{ ( \theta_0 - \theta_{ \ell } ) ( \theta_0^* - \theta_{ d-\ell+1 }^* ) }{ \vartheta_{ \ell } } \biggr) \w_{\ell} \notag \\
	=& \frac{ \langle \v , \v^{ * \downarrow } \rangle }{ \langle \v , \v^* \rangle } \cdot \frac{ \eta_d ( \theta_0 ) }{ \varphi_1 \eta_{ d-\ell } ( \theta_0 ) } \biggl( \frac{ \phi_{ d-\ell } }{ \vartheta_{ \ell+1 } } + \frac{ \phi_{ d-\ell+1 } }{ \vartheta_{ \ell } } - \varphi_1 \biggr) \w_{\ell} \notag
\end{align}
when $1 \leqslant \ell \leqslant d-1$, where the last line follows from \eqref{PA4} and \eqref{properties of vartheta}.
When $\ell = 0$ or $\ell = d$, we interpret $\phi_0 / \vartheta_{d+1} = \phi_{d+1} / \vartheta_0 = \varphi_1$ in \eqref{Glu}.
Indeed, when $\ell = 0$, since $G_0 E_0 \u_0 = 0$ by Lemma \ref{when GE* and GE vanish} (ii), it follows from \eqref{Gtu}, \eqref{E(l+1)u}, \eqref{1/a, -b/ac}, and \eqref{1/a} that
\begin{equation*}
	G_0 \u_0 = G_0 E_1 \u_0 = \frac{ \langle \v , \v^{ * \downarrow } \rangle }{ \langle \v , \v^* \rangle } \cdot \frac{ \phi_d }{ \varphi_1 } G_0 E_1 \v^* = \frac{ \langle \v , \v^{ * \downarrow } \rangle }{ \langle \v , \v^* \rangle } \cdot \frac{ \phi_d }{ \varphi_1 } \w_0.
\end{equation*}
When $\ell = d$, since
\begin{align*}
	( E_d + E_0 ) \w_{d-1} =& - \frac{ \phi_2 \dots \phi_d \tau_d ( \theta_d ) }{ \varphi_2 \dots \varphi_d \eta_d ( \theta_0 ) } E_d \v^* + E_0 \v^*, \\
	( E_d + E_0 ) \w_d =& E_0 \v^*
\end{align*}
by Theorem \ref{transition matrices to EKR basis} (ii), it follows that
\begin{equation*}
	( G_{d-1} + G_d ) E_d \v^* = \frac{ \varphi_2 \dots \varphi_d \eta_d ( \theta_0 ) }{ \phi_2 \dots \phi_d \tau_d ( \theta_d ) } ( - \w_{d-1} + \w_d),
\end{equation*}
so that by \eqref{Gtu} and \eqref{Elu} we have
\begin{equation*}
	G_d \u_d = \frac{ \langle \v , \v^{ * \downarrow } \rangle }{ \langle \v , \v^* \rangle } \cdot \frac{ \phi_1 \dots \phi_d \tau_d ( \theta_d ) }{ \varphi_1 \dots \varphi_d } G_d E_d \v^* = \frac{ \langle \v , \v^{ * \downarrow } \rangle }{ \langle \v , \v^* \rangle } \cdot \frac{ \phi_1 \eta_d ( \theta_0 ) }{ \varphi_1 } \w_d.
\end{equation*}
Notice that the transition matrix from the basis $E_0^* \v , E_d^* \v , \dots , E_1^* \v$ to the EKR basis $\w_0 , \dots , \w_d$ is upper triangular.
Hence, for $1 \leqslant t \leqslant d$, since
\begin{equation*}
	E_{ d-t+1 }^* \w_t = \frac{ \langle \v , \v^* \rangle }{ || \v ||^2 } \cdot \frac{ \tau_{ d-t+1 }^* ( \theta_{ d-t+1 }^* ) \eta_{ d-t } (\theta_0) \vartheta_t }{ \varphi_2 \dots \varphi_{ d-t+1 } ( \theta_{ d-t+1 }^* - \theta_0^*) } E_{ d-t+1 }^* \v
\end{equation*}
by Theorem \ref{transition matrices to EKR basis} (iii) and \eqref{properties of vartheta}, it follows that
\begin{equation*}
	G_t E_{ d-t+1 }^* \v = \frac{ || \v ||^2 }{ \langle \v , \v^* \rangle } \cdot \frac{ \varphi_2 \dots \varphi_{ d-t+1 } ( \theta_{ d-t+1 }^* - \theta_0^*) }{ \tau_{ d-t+1 }^* ( \theta_{ d-t+1 }^* ) \eta_{ d-t } (\theta_0) \vartheta_t } \w_t,
\end{equation*}
so that by \eqref{Gtu}, \eqref{E(d-l)*u}, and \eqref{squared norm}, we have
\begin{align}
	G_{ \ell+1 } \u =& \langle \v , \v^{ * \downarrow } \rangle \cdot \frac{ \phi_{ d-\ell+1 } \dots \phi_d }{ \eta_{\ell}^* ( \theta_{ d-\ell }^* ) || E_{ d-\ell }^* \v ||^2 } G_{ \ell+1 } E_{ d-\ell }^* \v \label{G(l+1)u} \\
	=& \frac{ \langle \v , \v^{ * \downarrow } \rangle }{ \langle \v , \v^* \rangle } \cdot \frac{ \eta_d ( \theta_0 ) ( \theta_{ d-\ell }^* - \theta_0^*) }{ \varphi_1 \eta_{ d-\ell-1 } (\theta_0) \vartheta_{ \ell+1 } } \w_{ \ell+1 } \notag
\end{align}
when $0 \leqslant \ell \leqslant d-1$.

\begin{thm}\label{transition matrices from EKR basis}
Setting $\w_{ -1 } = \w_{ d+1 } = 0$, the following hold.\footnote{We also interpret the coefficients of $\w_{ -1 }$ and $\w_{ d+1 }$ as zero (or indeterminates), whenever these terms appear.}
\begin{align*}
	\tau_{\ell} (A) \v^{ * \downarrow } =& \frac{ \langle \v , \v^{ * \downarrow } \rangle }{ \langle \v , \v^* \rangle } \cdot \frac{ \eta_d ( \theta_0 ) }{ \varphi_1 } \biggl\{ - \frac{ \phi_{ d-\ell+1 } }{ \eta_{ d-\ell } ( \theta_0 ) \vartheta_{ \ell } } \w_{ \ell-1 } \tag{i} \\
	& + \frac{ 1 }{ \eta_{ d-\ell } ( \theta_0 ) } \biggl( \frac{ \phi_{ d-\ell } }{ \vartheta_{ \ell+1 } } + \frac{ \phi_{ d-\ell+1 } }{ \vartheta_{ \ell } } - \varphi_1 \biggr) \w_{\ell} \\
	& + \frac{ \theta_{ d-\ell }^* - \theta_0^* }{ \eta_{ d-\ell-1 } (\theta_0) \vartheta_{ \ell+1 } } \w_{ \ell+1 } \biggr\}
\end{align*}
for $0 \leqslant \ell \leqslant d$, where we interpret $\phi_0 / \vartheta_{d+1} = \phi_{d+1} / \vartheta_0 = \varphi_1$.
\begin{align*}
	E_j \v^* =& \frac{ \varphi_2 \dots \varphi_j \eta_d ( \theta_0 ) }{ \phi_{ d-j+1 } \dots \phi_d \tau_j ( \theta_j ) \eta_{ d-j } ( \theta_j ) } \biggl\{ - \frac{ \phi_{ d-j+1 } \eta_{ d-j } ( \theta_j ) }{ \eta_{ d-j } ( \theta_0 ) \vartheta_j } \w_{ j-1 } \tag{ii} \\
	&+ \begin{multlined}[t] ( \theta_j - \theta_0 ) \sum_{ t=j }^{d-1} \frac{ \eta_{ d-t-1 } ( \theta_j ) }{ \eta_{ d-t } ( \theta_0 )} \biggl( \frac{ \phi_{d-t} }{ \vartheta_{t+1} } \\
		+ \frac{ ( \theta_j - \theta_{t+1} ) ( \theta_{ d-t+1 }^* - \theta_0^* ) }{ \vartheta_t } \biggr) \w_t \end{multlined} \\
	&+ \bigl( \varphi_1 + ( \theta_1^* - \theta_0^* ) ( \theta_j - \theta_0 ) \bigr) \w_d \bigg\}
\end{align*}
for $1 \leqslant j \leqslant d$, and $E_0 \v^* = \w_d$.
\begin{align*}
	E_i^* \v =& \frac{ \langle \v , \v^* \rangle }{ || \v^* ||^2 } \cdot \frac{ \varphi_2 \dots \varphi_i \eta_d (\theta_0) \eta_d^* ( \theta_0^* ) }{ \phi_1 \dots \phi_i \tau_i^* ( \theta_i^* ) \eta_{ d-i }^* ( \theta_i^* ) } \biggl\{ \frac{ \varphi_1 + ( \theta_1 - \theta_0 ) ( \theta_i^* - \theta_0^* ) }{ \eta_d (\theta_0) } \w_0 \tag{iii} \\
	&+ \begin{multlined}[t] ( \theta_i^* - \theta_0^* ) \sum_{ t=1 }^{d-i} \frac{ \eta_{ t-1 }^* ( \theta_i^* ) }{ \phi_{ d-t+1 } \dots \phi_d \eta_{ d-t } (\theta_0) } \biggl( \frac{ \phi_{ d-t+1 } }{ \vartheta_t } \\
		+ \frac{ ( \theta_i^* - \theta_{ d-t+1 }^* ) ( \theta_{ t+1 } - \theta_0 ) }{ \vartheta_{ t+1 } } \biggr) \w_t \end{multlined} \\
	&+ \frac{ \eta_{ d-i }^* ( \theta_i^* ) ( \theta_i^* - \theta_0^* ) }{ \phi_{ i+1 } \dots \phi_d \eta_{ i-1 } (\theta_0) \vartheta_i } \w_{ d-i+1 } \biggr\}
\end{align*}
for $1 \leqslant i \leqslant d$, and $E_0^* \v = \langle \v , \v^* \rangle || \v^* ||^{ -2 } \w_0$.
\end{thm}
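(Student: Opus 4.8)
The plan is to harvest what has already been assembled. For part (i) I would simply substitute the three projection formulas \eqref{G(l-1)u}, \eqref{Glu}, and \eqref{G(l+1)u} into the decomposition \eqref{ul has three terms} of $\u_\ell = \tau_\ell(A)\v^{*\downarrow}$ as $G_{\ell-1}\u_\ell + G_\ell\u_\ell + G_{\ell+1}\u_\ell$; to match the $\w_{\ell-1}$-coefficient with the form in the display one uses $\eta_{d-\ell+1}(\theta_0) = (\theta_0-\theta_\ell)\eta_{d-\ell}(\theta_0)$, which turns $\phi_{d-\ell+1}(\theta_\ell-\theta_0)/\eta_{d-\ell+1}(\theta_0)$ into $-\phi_{d-\ell+1}/\eta_{d-\ell}(\theta_0)$. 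The two endpoint cases $\ell=0$ and $\ell=d$ were already verified separately in the discussion preceding the theorem, and they are subsumed into the single display precisely through the conventions $\phi_0/\vartheta_{d+1} = \phi_{d+1}/\vartheta_0 = \varphi_1$ and $\w_{-1} = \w_{d+1} = 0$.

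For part (ii) the route goes through the $\Phi^\downarrow$-split basis. Writing Lemma \ref{transition matrices between split and standard bases} (iii) for $\Phi^\downarrow$ in place of $\Phi$ (under $\downarrow$ the objects $E_j$, $A$, $\theta_j$, $\tau_\ell$, $\eta_\ell$ are unchanged, while the vector $\v^*$ becomes $\v^{*\downarrow}$) gives $E_j\v^{*\downarrow} = \sum_{\ell=j}^d \eta_{d-\ell}(\theta_j)\tau_j(\theta_j)^{-1}\eta_{d-j}(\theta_j)^{-1}\tau_\ell(A)\v^{*\downarrow}$, and Lemma \ref{transition matrices between split and standard bases} (v) then rewrites the left-hand side as an explicit scalar multiple of $E_j\v^*$ (both sides lie in the one-dimensional space $E_jV$). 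Substituting part (i) into this expansion and collecting the coefficient of each $\w_t$ --- which receives contributions only from those $\ell \in \{t-1,t,t+1\}$ lying in $\{j,\dots,d\}$ --- produces (ii); the factors $\langle\v,\v^{*\downarrow}\rangle/\langle\v,\v^*\rangle$ cancel and leave exactly the prefactor $\varphi_2\cdots\varphi_j\eta_d(\theta_0)(\phi_{d-j+1}\cdots\phi_d\tau_j(\theta_j)\eta_{d-j}(\theta_j))^{-1}$ appearing there, while the identity $E_0\v^* = \w_d$ is immediate from $\w_d \in E_0V$ and $E_0\w_d = E_0\v^*$.

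The step I expect to be the real obstacle is the coefficient bookkeeping in (ii): one must merge the (up to) three overlapping contributions to a given $\w_t$, handle the boundary indices $t=j-1$ and $t=d$ --- where only one, respectively two, of the potential contributions survive --- separately from the generic ones, and then use \eqref{PA4} together with the symmetries $\vartheta_i = \vartheta_{d-i+1} = \vartheta_i^*$ from \eqref{properties of vartheta} to see that the telescoping combinations that emerge collapse to values of $\phi$, thereby bringing the coefficients into the stated closed form. Finally, for part (iii) I would not redo this computation but deduce it from (ii) by duality: applying (ii) to $\Phi^*$ and tracking the parameter array via Lemma \ref{how D4 acts on parameter array} (i) (so $\theta_i \leftrightarrow \theta_i^*$, $\varphi_i$ fixed, $\phi_i \mapsto \phi_{d-i+1}$, hence $\tau_\ell \mapsto \tau_\ell^*$, $\eta_\ell \mapsto \eta_\ell^*$, and $\vartheta_i$ unchanged) expresses $E_i^*\v$ in the $\Phi^*$-EKR basis $\{\w_t^*\}$; Corollary \ref{how * affect on EKR basis} rewrites each $\w_t^*$ as an explicit scalar times $\w_{d-t}$, and after the substitution $t \mapsto d-t$ --- under which $\w_{j-1}^*$ goes to $\w_{d-i+1}$, $\w_d^*$ goes to $\w_0$, and the middle sum over $j \leqslant t \leqslant d-1$ becomes the sum over $1 \leqslant d-t \leqslant d-i$ --- together with an application of \eqref{squared norm} to compare $||\v^*||^2$ with $||\v||^2$, the claimed formula for $E_i^*\v$ emerges. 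The last assertion $E_0^*\v = \langle\v,\v^*\rangle\,||\v^*||^{-2}\w_0$ holds because $\w_0 = \v^*$ while the projection $E_0^*$ --- being a polynomial in $A^*$, hence fixed by $\dag$ --- sends $\v$ to $\langle\v,\v^*\rangle\,||\v^*||^{-2}\v^*$.
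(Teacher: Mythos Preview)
Your proposal is correct and follows essentially the same approach as the paper's own proof: part (i) comes directly from \eqref{ul has three terms} together with \eqref{G(l-1)u}, \eqref{Glu}, \eqref{G(l+1)u}; part (ii) passes through Lemma~\ref{transition matrices between split and standard bases}~(iii) applied to $\Phi^{\downarrow}$, combined with Lemma~\ref{transition matrices between split and standard bases}~(v) and part (i), and then simplified via \eqref{PA4} and \eqref{properties of vartheta}; and part (iii) is obtained from (ii) by duality via Corollary~\ref{how * affect on EKR basis} and Lemma~\ref{how D4 acts on parameter array}~(i). Your identification of the coefficient bookkeeping in (ii) as the main computational effort, and of the boundary indices $t=j-1$ and $t=d$ as requiring separate handling, is exactly right.
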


\begin{proof}
(i):
Immediate from \eqref{ul has three terms}, \eqref{G(l-1)u}, \eqref{Glu}, and \eqref{G(l+1)u}.

(ii):
By (i) above, Lemma \ref{transition matrices between split and standard bases} (iii) and (v), and Lemma \ref{how D4 acts on parameter array} (ii), we have
\begin{align*}
	E_j \v^* =& \frac{ \langle \v , \v^* \rangle }{ \langle \v , \v^{ * \downarrow } \rangle } \cdot \frac{ \varphi_1 \dots \varphi_j }{ \phi_{ d-j+1 } \dots \phi_d } \sum_{ \ell=j }^d \frac{ \eta_{ d-\ell } ( \theta_j ) }{ \tau_j ( \theta_j ) \eta_{ d-j } ( \theta_j ) } \tau_{ \ell } ( A ) \v^{ * \downarrow } \\
	=& \frac{ \varphi_2 \dots \varphi_j \eta_d ( \theta_0 ) }{ \phi_{ d-j+1 } \dots \phi_d \tau_j ( \theta_j ) \eta_{ d-j } ( \theta_j ) } \sum_{ \ell=j }^d \eta_{ d-\ell } ( \theta_j ) \biggl\{ \frac{ \phi_{ d-\ell+1 } ( \theta_{ \ell } - \theta_0 ) }{ \eta_{ d-\ell+1 } ( \theta_0 ) \vartheta_{ \ell } } \w_{ \ell-1 } \\
	& + \frac{ 1 }{ \eta_{ d-\ell } ( \theta_0 ) } \biggl( \frac{ \phi_{ d-\ell } }{ \vartheta_{ \ell+1 } } + \frac{ \phi_{ d-\ell+1 } }{ \vartheta_{ \ell } } - \varphi_1 \biggr) \w_{\ell} + \frac{ \theta_{ d-\ell }^* - \theta_0^* }{ \eta_{ d-\ell-1 } (\theta_0) \vartheta_{ \ell+1 } } \w_{ \ell+1 } \biggr\}
\end{align*}
for $1 \leqslant j \leqslant d$.
Now simplify the last line using \eqref{PA4} and \eqref{properties of vartheta}.

(iii):
Apply ``$*$'' to (ii) above with respect to the $\Phi^*$-EKR basis $\{ \w_t^* \}_{ t=0 }^d$ with $E_0^* \w_t^* = E_0^* \v$ $( 0 \leqslant t  \leqslant d )$, and then use Corollary \ref{how * affect on EKR basis}, Lemma \ref{how D4 acts on parameter array} (i), and \eqref{properties of vartheta}.
\end{proof}

Finally, we shall describe the matrices representing $A$ and $A^*$ with respect to the EKR basis $\{ \w_t \}_{ t=0 }^d$.
We use the following notation:
\begin{equation*}
	\Delta_s = \frac{ \eta_{ s-1 }^* ( \theta_0^* ) \bigl( ( \theta_{ d-s+1 }^* - \theta_0^* ) \vartheta_{ s+1 } - ( \theta_{ d-s }^* - \theta_0^* ) \vartheta_s \bigr) }{ \phi_{ d-s+1 } \dots \phi_d \eta_{ d-s-1 } ( \theta_0 ) \vartheta_{ s+1 } } \quad ( 1 \leqslant s \leqslant d-1 ).
\end{equation*}
Notice that
\begin{equation*}
	\Delta_s^* = \frac{ \eta_{ s-1 } ( \theta_0 ) \bigl( ( \theta_{ d-s+1 } - \theta_0 ) \vartheta_{ s+1 } - ( \theta_{ d-s } - \theta_0 ) \vartheta_s \bigr) }{ \phi_1 \dots \phi_s \eta_{ d-s-1 }^* ( \theta_0^* ) \vartheta_{ s+1 } } \quad ( 1 \leqslant s \leqslant d-1 ) ,
\end{equation*}
by virtue of  Theorem \ref{how D4 acts on parameter array} (i) and \eqref{properties of vartheta}.

\begin{thm}\label{A, A* in terms of EKR basis}
With the above notation, the following hold.
\begin{align*}
	A \w_t =& \theta_{t+1} \w_t + \biggl( \frac{ \phi_{ d-t+1 } \dots \phi_d \eta_{d-t} ( \theta_0 ) }{ \eta_t^* ( \theta_0^* ) } \Delta_{ t+1 } - ( \theta_{ t+1 } - \theta_0 ) \biggr) \w_{t+1} \tag{i} \\
	& + \frac{ \phi_{ d-t+1 } \dots \phi_d \eta_{d-t} ( \theta_0 ) }{ \eta_t^* ( \theta_0^* ) } \Biggl\{ \sum_{ s=t+2 }^{ d-1 } ( \Delta_s - \Delta_{ s-1 } ) \w_s - \Delta_{ d-1 } \w_d \Biggr\}
\end{align*}
for $ 0 \leqslant t \leqslant d-2$, $A \w_{ d-1 } = \theta_d \w_{ d-1 } - ( \theta_d - \theta_0 ) \w_d$, and $A \w_d = \theta_0 \w_d$.
\begin{align*}
	A^* \w_t =& - \frac{ \phi_1 \dots \phi_d }{ \eta_d ( \theta_0 ) } \Delta_{ d-1 }^* \w_0 \tag{ii} \\
	& + \sum_{ s=1}^{ t-2 } \frac{ \phi_1 \dots \phi_{ d-s } \eta_s^* ( \theta_0^* ) }{ \eta_{ d-s } ( \theta_0 ) } ( \Delta_{ d-s }^* - \Delta_{ d-s-1 }^* ) \w_s \\
	& + \biggl( \frac{ \phi_1 \dots \phi_{ d-t+1 } \eta_{ t-1 }^* ( \theta_0^* ) }{ \eta_{ d-t+1 } ( \theta_0 ) } \Delta_{ d-t+1 }^* - \frac{ \phi_{ d-t+1 } }{ \theta_t - \theta_0 } \biggr) \w_{t-1} + \theta_{ d-t+1 }^* \w_t
\end{align*}
for $2 \leqslant t \leqslant d$, $A^* \w_1 = \theta_d^* \w_1 - ( \theta_d^* - \theta_0^* ) \w_0$, and $A^* \w_0 = \theta_0^* \w_0$.
\end{thm}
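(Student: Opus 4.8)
The plan is to extract both matrices from the transition matrices already at hand (Theorems \ref{transition matrices to EKR basis} and \ref{transition matrices from EKR basis}), using two structural remarks to fix the shape of the answer and its diagonal beforehand. Since $W_t \subseteq E_0 V + \sum_{j=t+1}^d E_j V$, a sum of $A$-eigenspaces and hence $A$-invariant, Corollary \ref{when EKR basis is defined}(i) gives $A \w_t \in \sum_{s=t}^d W_s$, so $A \w_t$ is a combination of $\w_t, \dots, \w_d$ --- the band shape of (i); dually $W_t \subseteq E_0^* V + \sum_{i=d-t+1}^d E_i^* V$ is $A^*$-invariant, so $A^* \w_t$ involves only $\w_0, \dots, \w_t$, as in (ii). For the diagonal entries I would pass to quotients: for $0 \leqslant t \leqslant d-1$ the subspace $\sum_{s=t+1}^d W_s = E_0 V + \sum_{j=t+2}^d E_j V$ has codimension one in $\sum_{s=t}^d W_s$, both are $A$-invariant, and $A$ acts on the quotient (identified with $E_{t+1} V$) as $\theta_{t+1}$; since $\w_t$ represents a nonzero class there, the coefficient of $\w_t$ in $A \w_t$ is $\theta_{t+1}$, and symmetrically the coefficient of $\w_t$ in $A^* \w_t$ is $\theta_{d-t+1}^*$ for $1 \leqslant t \leqslant d$.

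The boundary cases I would handle directly: $A \w_d = \theta_0 \w_d$ from $\w_d = E_0 \v^*$, and $A \w_{d-1} = \theta_d \w_{d-1} - (\theta_d - \theta_0) \w_d$ by writing $\w_{d-1} = E_0 \v^* + E_d \w_{d-1}$ (using $W_{d-1} \subseteq E_0 V + E_d V$ and $E_0 \w_{d-1} = E_0 \v^* = \w_d$) and applying $A$; the cases $A^* \w_0$, $A^* \w_1$ are the same, via $\w_0 = \v^*$, $W_1 \subseteq E_0^* V + E_d^* V$, and the value of $E_0^* \w_1$ read off from Theorem \ref{transition matrices to EKR basis}(iii). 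For the remaining entries of (i) I would take the expansion of $\w_t$ in the $\Phi^{\downarrow}$-split basis from Theorem \ref{transition matrices to EKR basis}(i), apply $A$ via $A\, \tau_\ell(A) \v^{* \downarrow} = \tau_{\ell+1}(A) \v^{* \downarrow} + \theta_\ell\, \tau_\ell(A) \v^{* \downarrow}$ together with $\tau_{d+1}(A) = 0$, and re-expand each $\tau_\ell(A) \v^{* \downarrow}$ --- a three-term expression in $\w_{\ell-1}, \w_\ell, \w_{\ell+1}$ --- by Theorem \ref{transition matrices from EKR basis}(i). Gathering the coefficient of a fixed $\w_s$, inserting the explicit scalars, and simplifying with \eqref{PA4}, \eqref{PA5} and \eqref{properties of vartheta} should produce a telescoping sum whose collapse is what generates the differences $\Delta_s - \Delta_{s-1}$ and the terminal $-\Delta_{d-1} \w_d$; a convenient check is that applying $E_0$ to $A \w_t = \sum_s \lambda_s \w_s$ and using $E_0 \w_s = E_0 \v^*$ for all $s$ forces $\sum_s \lambda_s = \theta_0$, which the telescoped coefficients indeed satisfy. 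The auxiliary formula for $\Delta_s^*$ recorded before the theorem follows the same way from Lemma \ref{how D4 acts on parameter array}(i) and \eqref{properties of vartheta}.

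For part (ii) I would argue by $*$-duality rather than repeat the computation: apply (i) to the Leonard system $\Phi^*$, whose parameter array is given by Lemma \ref{how D4 acts on parameter array}(i), so that $A(\Phi^*) = A^*$, the two eigenvalue sequences are interchanged, $\phi_i(\Phi^*) = \phi_{d-i+1}$, $\vartheta_i(\Phi^*) = \vartheta_i$ by \eqref{properties of vartheta}, and $\Delta_s(\Phi^*) = \Delta_s^*$; this gives $A^* \w_t^*$ in the $\Phi^*$-EKR basis. Converting to $\{\w_t\}$ via Corollary \ref{how * affect on EKR basis}, which writes $\w_t^*$ as an explicit scalar multiple of $\w_{d-t}$, then dividing by that scalar, relabelling $t \mapsto d-t$, and simplifying the scalar ratios with \eqref{squared norm}, should yield (ii).

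The main obstacle is the bookkeeping in the computational step for (i). The containment and quotient arguments pin down only the band width and the diagonal, so every off-diagonal coefficient still has to be matched; after re-expanding in the EKR basis one faces nested sums coming out of Theorems \ref{transition matrices to EKR basis} and \ref{transition matrices from EKR basis}, and the work lies in organizing them --- invoking \eqref{PA4} repeatedly to trade the $\phi_i$ for differences of eigenvalues --- until the telescoping that collapses them into $\Delta_s - \Delta_{s-1}$ (and the lone $-\Delta_{d-1} \w_d$) becomes transparent, with the identity $\sum_s \lambda_s = \theta_0$ guarding against sign and indexing slips.
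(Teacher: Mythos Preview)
Your proposal is correct and follows essentially the same route as the paper: for (i) you expand $\w_t$ in the $\Phi^{\downarrow}$-split basis via Theorem~\ref{transition matrices to EKR basis}(i), apply $A$ using $A\,\tau_\ell(A)=\tau_{\ell+1}(A)+\theta_\ell\tau_\ell(A)$, simplify with \eqref{PA4} and \eqref{properties of vartheta}, and then convert back via Theorem~\ref{transition matrices from EKR basis}(i); for (ii) you invoke $*$-duality through Corollary~\ref{how * affect on EKR basis} and Lemma~\ref{how D4 acts on parameter array}(i). Your preliminary structural remarks---reading off the band shape from the $A$- and $A^*$-invariance of the flags in Corollary~\ref{when EKR basis is defined}(i), and extracting the diagonal entries $\theta_{t+1}$, $\theta_{d-t+1}^*$ from the induced action on one-dimensional quotients---are a welcome addition not made explicit in the paper, as is your consistency check $\sum_s \lambda_s=\theta_0$ via $E_0$; these do not change the argument but make the target visible before the computation and help guard against errors.
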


\begin{proof}
(i)
By Theorem \ref{transition matrices to EKR basis} (i), \eqref{PA4}, \eqref{properties of vartheta}, and since $A \tau_{ \ell } (A) = \tau_{ \ell+1 } (A) + \theta_{ \ell } \tau_{ \ell } (A)$, we obtain
\begin{align*}
	A \w_t =& \frac{ \langle \v, \v^* \rangle }{ \langle \v, \v^{* \downarrow} \rangle } \Biggl\{ \sum_{\ell=1}^t \frac{ \eta_{d-\ell+1} (\theta_0) }{ \eta_d ( \theta_0 ) } \tau_{ \ell } (A) \v^{* \downarrow} + \sum_{\ell=0}^t \frac{ \eta_{d-\ell} (\theta_0) \theta_{ \ell } }{ \eta_d ( \theta_0 ) } \tau_{ \ell } ( A ) \v^{ * \downarrow } \\
	& + \frac{ \eta_{d-t} (\theta_0) }{ \eta_d (\theta_0) \eta_t^* (\theta_0^*) } \sum_{ \ell=t+1 }^d \frac{ \eta_{ \ell-1 }^* (\theta_0^*) }{ \phi_{ d-\ell+2 } \dots \phi_{d-t}} \tau_{ \ell } (A) \v^{* \downarrow} \\
	& + \frac{ \eta_{d-t} (\theta_0) }{ \eta_d (\theta_0) \eta_t^* (\theta_0^*) } \sum_{\ell=t+1}^d \frac{ \eta_{\ell}^* (\theta_0^*) \theta_{ \ell } }{ \phi_{d-\ell+1} \dots \phi_{d-t}} \tau_{ \ell } ( A ) \v^{ * \downarrow } \Biggr\} \\
	=& \frac{ \langle \v, \v^* \rangle }{ \langle \v, \v^{* \downarrow} \rangle } \Biggl\{ \theta_0 \sum_{ \ell=0 }^t \frac{ \eta_{d-\ell} (\theta_0) }{ \eta_d ( \theta_0 ) } \tau_{ \ell } ( A ) \v^{ * \downarrow } \\
	& + \frac{ \eta_{d-t} (\theta_0) \theta_0 }{ \eta_d (\theta_0) \eta_t^* (\theta_0^*) } \sum_{ \ell=t+1 }^d \frac{ \eta_{\ell}^* (\theta_0^*) }{ \phi_{d-\ell+1} \dots \phi_{d-t}} \tau_{ \ell } ( A ) \v^{ * \downarrow } \\
	& + \frac{ \varphi_1 \eta_{d-t} (\theta_0) }{ \eta_d (\theta_0) \eta_t^* (\theta_0^*) } \sum_{ \ell=t+1 }^d \frac{ \eta_{ \ell-1 }^* (\theta_0^*) \vartheta_{ d-\ell+1 } }{ \phi_{ d-\ell+1 } \dots \phi_{d-t}} \tau_{ \ell } (A) \v^{* \downarrow} \Biggr\} \\
	=& \theta_0 \w_t + \frac{ \langle \v, \v^* \rangle }{ \langle \v, \v^{* \downarrow} \rangle } \cdot \frac{ \varphi_1 \eta_{d-t} (\theta_0) }{ \eta_d (\theta_0) \eta_t^* (\theta_0^*) } \sum_{\ell=t+1}^d \frac{ \eta_{ \ell-1 }^* (\theta_0^*) \vartheta_{ \ell } }{ \phi_{ d-\ell+1 } \dots \phi_{d-t}} \tau_{ \ell } (A) \v^{* \downarrow}.
\end{align*}
Now apply Theorem \ref{transition matrices from EKR basis} (i) and simplify the result using \eqref{PA4} and \eqref{properties of vartheta}.

(ii):
Apply ``$*$'' to (i) above with respect to the $\Phi^*$-EKR basis $\{ \w_t^* \}_{ t=0 }^d$ such that $E_0^* \w_t^* = E_0^* \v$ $( 0 \leqslant t  \leqslant d )$, and then use Corollary \ref{how * affect on EKR basis}, Lemma \ref{how D4 acts on parameter array} (i), and \eqref{properties of vartheta}.
\end{proof}

We end this section with an attractive formula for $\Delta_s$.

\begin{lem}\label{unexpected formula}
For $1 \leqslant s \leqslant d-1$, we have
\begin{equation*}
	( \theta_{ d-s+1 } - \theta_0 ) \vartheta_{ s+1 } - ( \theta_{ d-s } - \theta_0 ) \vartheta_s =\frac{ \bigl( { \theta_{ d - \lfloor \frac{ s }{2} \rfloor } } - \theta_{ \lfloor \frac{ s }{2} \rfloor } \bigr) \bigl( \theta_{ d - \lfloor \frac{ s-1 }{2} \rfloor } - \theta_{ \lfloor \frac{ s+1 }{2} \rfloor } \bigr)  }{ \theta_d - \theta_0 }.
\end{equation*}
\end{lem}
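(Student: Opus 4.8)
The plan is to remove the two degenerate values $q=\pm1$ by a density argument and then verify the identity directly from the closed form of the eigenvalue sequence.

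First I would clear the denominator. Multiplying both sides by $\theta_d-\theta_0$ turns the asserted equality into an equality of polynomials in $\theta_0,\theta_1,\theta_2,\beta$: by \eqref{PA5} each $\theta_i$ ($3\leqslant i\leqslant d$) is a polynomial in $\theta_0,\theta_1,\theta_2,\beta$ with integer coefficients, and $(\theta_0-\theta_d)\vartheta_j=\sum_{h=0}^{j-1}(\theta_h-\theta_{d-h})$, so every ``denominator'' has been cleared. Hence it suffices to prove the identity on the Zariski-dense locus where $q\ne\pm1$ (equivalently, since the resulting polynomial identity has integral coefficients, one may assume $\mathbb{K}=\mathbb{C}$ with $q$ not a root of unity). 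So assume $q\ne\pm1$ from now on.

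By \eqref{PA5} the differences $\theta_{i-1}-\theta_i$ satisfy the recurrence $x_{i+1}-\beta x_i+x_{i-1}=0$, whose characteristic roots are $q$ and $q^{-1}$ (with $q+q^{-1}=\beta$); summing, there exist $a,b,c\in\overline{\mathbb{K}}$ with $\theta_i=a+bq^i+cq^{-i}$ ($0\leqslant i\leqslant d$), and $b-cq^{-d}\ne0$ because $\theta_0\ne\theta_d$. This gives the factorization $\theta_i-\theta_j=(q^i-q^j)(b-cq^{-i-j})$, and summing geometric progressions in the definition of $\vartheta_i$ yields $\vartheta_i=(1-q^i)(1-q^{d-i+1})/(1-q)(1-q^d)$ for $1\leqslant i\leqslant d$. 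I would substitute these into the left-hand side, pull out the factor $-(1-q^{d-s+1})(1-q^{d-s})/(1-q)(1-q^d)$, and apply the elementary identity
\[
	(b-cq^{-(d-s+1)})(1-q^{s+1})-(b-cq^{-(d-s)})(1-q^s)=(1-q)\bigl(bq^s-cq^{-(d-s+1)}\bigr)
\]
to conclude that the left-hand side equals $(q^{d-s+1}-1)(q^{d-s}-1)\bigl(bq^s-cq^{-(d-s+1)}\bigr)\big/(q^d-1)$.

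For the right-hand side, the key observations are that the two differences $\theta_{d-\lfloor s/2\rfloor}-\theta_{\lfloor s/2\rfloor}$ and $\theta_{d-\lfloor(s-1)/2\rfloor}-\theta_{\lfloor(s+1)/2\rfloor}$ have index-sums $d$ and $d+1$ respectively, so that one factor $b-cq^{-d}=(\theta_d-\theta_0)/(q^d-1)$ cancels against the denominator $\theta_d-\theta_0$, and that — treating $s$ even and $s$ odd separately to resolve the floor functions — the product of the remaining two factors of the form $q^i-q^j$ equals $q^s(q^{d-s}-1)(q^{d-s+1}-1)$ in either parity. Hence the right-hand side reduces to $q^s(q^{d-s}-1)(q^{d-s+1}-1)\bigl(b-cq^{-(d+1)}\bigr)\big/(q^d-1)$, which equals the left-hand side because $q^s\bigl(b-cq^{-(d+1)}\bigr)=bq^s-cq^{-(d-s+1)}$. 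This settles the case $q\ne\pm1$, and the general case follows from the first paragraph. The computation is essentially bookkeeping with geometric series, made short by the fact that $a$ drops out and $c$ survives only through $b-cq^{-d}$ and $b-cq^{-(d+1)}$; the one step calling for genuine care is the parity case split used to evaluate the floor functions on the right.
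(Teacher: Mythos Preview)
Your argument is correct. The computation in the generic case $q\ne\pm1$ is accurate: the factorization $\theta_i-\theta_j=(q^i-q^j)(b-cq^{-i-j})$ and the closed form $\vartheta_i=(1-q^i)(1-q^{d-i+1})/\bigl((1-q)(1-q^d)\bigr)$ are exactly right, and your bracket identity and the parity split on the right-hand side both check out line by line. The reduction step is also sound: after multiplying through by $\theta_d-\theta_0$ and using $(\theta_0-\theta_d)\vartheta_j=\sum_{h=0}^{j-1}(\theta_h-\theta_{d-h})$, both sides become polynomials with integer coefficients in $\theta_0,\theta_1,\theta_2,\beta$ (the remaining $\theta_i$ being determined by the third-order recurrence with characteristic roots $1,q,q^{-1}$), so vanishing on the Zariski-open set $\beta\ne\pm2$ forces the identity over any field. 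One small point of phrasing: the parameter you restrict is really $\beta$, not $q$; and ``$q$ not a root of unity'' is more than you need, though harmless.

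The paper's own proof is different in organization but not in substance. It simply invokes the explicit closed forms for the $\theta_i$ given in \cite[Lemma 10.2]{Terwilliger2001LAA} and verifies the identity separately in each of the cases listed there (generic $q$; $q=1$; $q=-1$), i.e., a direct case-by-case check. Your approach collapses this to a single computation in the generic case and handles the degenerate cases by the specialization argument, which is cleaner and avoids repeating essentially the same calculation with polynomial or alternating forms of~$\theta_i$. The paper's approach, on the other hand, is entirely self-contained and makes no appeal to density, so a reader can verify it with nothing beyond arithmetic. Either route is perfectly acceptable here.
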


\begin{proof}
This is verified case by case using \cite[Lemma 10.2]{Terwilliger2001LAA}.
\end{proof}

\begin{cor}
For $1 \leqslant s \leqslant d-1$, we have
\begin{equation*}
	\Delta_s = \frac{ \eta_{ s-1 }^* ( \theta_0^* ) \bigl( \theta_{ d - \lfloor \frac{ s }{2} \rfloor }^* - \theta_{ \lfloor \frac{ s }{2} \rfloor }^* \bigr) \bigl( \theta_{ d - \lfloor \frac{ s-1 }{2} \rfloor }^* - \theta_{ \lfloor \frac{ s+1 }{2} \rfloor }^* \bigr) }{ \phi_{d-s+1} \dots \phi_d \eta_{ d-s-1 } ( \theta_0 ) ( \theta_d^* - \theta_0^* ) \vartheta_{ s+1 } }.
\end{equation*}
\end{cor}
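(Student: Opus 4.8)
The plan is to read off this Corollary as the ``$*$-dual'' of Lemma~\ref{unexpected formula}. The key observation is that Lemma~\ref{unexpected formula} is a formal identity among the eigenvalues $\{\theta_i\}_{i=0}^d$ and the scalars $\{\vartheta_i\}_{i=1}^d$ that holds for \emph{any} Leonard system (the standing hypotheses, in particular Assumption~\ref{EKR basis exists}, being invariant under $*$, since $d$ and the recurrence scalar $\beta$ in \eqref{PA5} are common to the sequences $\{\theta_i\}_{i=0}^d$ and $\{\theta_i^*\}_{i=0}^d$). Hence the Lemma applies verbatim with $\Phi$ replaced by $\Phi^*$.

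First I would record what Lemma~\ref{unexpected formula} says for $\Phi^*$. By Lemma~\ref{how D4 acts on parameter array}~(i) the operation $*$ interchanges $\{\theta_i\}_{i=0}^d$ with $\{\theta_i^*\}_{i=0}^d$, while by \eqref{properties of vartheta} each $\vartheta_i$ is fixed, i.e.\ $\vartheta_i^*=\vartheta_i$. Therefore Lemma~\ref{unexpected formula}, read for $\Phi^*$, becomes
\begin{equation*}
	( \theta_{ d-s+1 }^* - \theta_0^* ) \vartheta_{ s+1 } - ( \theta_{ d-s }^* - \theta_0^* ) \vartheta_s = \frac{ \bigl( \theta_{ d - \lfloor \frac{s}{2} \rfloor }^* - \theta_{ \lfloor \frac{s}{2} \rfloor }^* \bigr) \bigl( \theta_{ d - \lfloor \frac{s-1}{2} \rfloor }^* - \theta_{ \lfloor \frac{s+1}{2} \rfloor }^* \bigr) }{ \theta_d^* - \theta_0^* }
\end{equation*}
for $1 \leqslant s \leqslant d-1$. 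The left-hand side is precisely the bracketed factor appearing in the numerator of $\Delta_s$ in the definition stated just before Theorem~\ref{A, A* in terms of EKR basis}, so substituting this evaluation into that definition leaves the prefactor $\eta_{s-1}^*(\theta_0^*)\,\bigl(\phi_{d-s+1}\cdots\phi_d\,\eta_{d-s-1}(\theta_0)\,\vartheta_{s+1}\bigr)^{-1}$ untouched and introduces the two differences of $\theta_i^*$'s in the numerator together with the extra factor $(\theta_d^*-\theta_0^*)^{-1}$. This is exactly the asserted closed form, and no further manipulation is needed.

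I do not expect any real obstacle here, since Lemma~\ref{unexpected formula} is granted; the only point requiring care is the $*$-bookkeeping, namely checking that the parts of $\Delta_s$ other than the bracket are legitimately left alone (they are: $\phi_{d-s+1},\dots,\phi_d$ and $\eta_{d-s-1}(\theta_0)$ remain $\Phi$-data because we are rewriting $\Delta_s$ for $\Phi$ itself, not computing $\Delta_s^*$, and only the bracketed combination of $\theta_i^*$'s and $\vartheta_i$'s is being re-expressed). One could instead avoid the appeal to $*$ altogether and simply repeat the case-by-case check of Lemma~\ref{unexpected formula} with every $\theta_i$ replaced by $\theta_i^*$, but the duality argument is shorter.
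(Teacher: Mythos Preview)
Your proposal is correct and follows essentially the same route as the paper, which simply says the corollary is immediate from Lemma~\ref{unexpected formula} and \eqref{properties of vartheta}. You have merely spelled out the $*$-duality step in more detail than the paper does.
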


\begin{proof}
Immediate from Lemma \ref{unexpected formula} and \eqref{properties of vartheta}.
\end{proof}

\section{Applications to the Erd\H{o}s--Ko--Rado theorems}\label{sec: applications}

The Erd\H{o}s--Ko--Rado type theorems for various families of $Q$-polynomial distance-regular graphs provide one of the most successful applications of Delsarte's linear programming method \cite{Delsarte1973PRRS}.\footnote{See, e.g., \cite{DL1998IEEE,MT2009EJC} for more applications as well as extensions of this method.}

Let $\Gamma$ be a $Q$-polynomial distance-regular graph with vertex set $X$.
(We refer the reader to \cite{BI1984B,BCN1989B,Terwilliger1992JAC} for background material.)
Pick a ``base vertex'' $x \in X$ and let $\Phi = \Phi ( \Gamma )$ be the Leonard system (over $\mathbb{K} = \mathbb{R}$) afforded on the primary module of the Terwilliger algebra $\bm{T} ( x )$; cf.~\cite[Example (3.5)]{Tanaka2011EJC}.\footnote{We remark that $\Phi$ is independent of $x \in X$ up to isomorphism.}
The second eigenmatrix $Q = ( Q_{ij} )_{ i,j=0 }^d$ of $\Gamma$ is defined by\footnote{The matrix $Q$ is denoted $P^*$ in \cite[p.~264]{Terwilliger2004LAA}.}
\begin{equation*}
	E_j \v^* = \frac{ \langle \v, \v^* \rangle }{ || \v ||^2 } \sum_{i=0}^d Q_{ij} E_i^* \v \quad ( 0 \leqslant j \leqslant d ).
\end{equation*}
As summarized in \cite{Tanaka2010pre}, every ``$t$-intersecting family'' $Y \subseteq X$ is associated with a vector $\bm{e} = ( e_0 , \dots , e_d)$ (called the inner distribution of $Y$) satisfying
\begin{gather*}
	e_0=1 , \quad e_1 \geqslant 0 , \dots , e_{ d-t } \geqslant 0 , \quad e_{ d-t+1 } = \dots = e_d = 0 , \\
	| Y | = ( \bm{e} Q )_0, \quad ( \bm{e} Q )_1 \geqslant 0 , \dots , ( \bm{e} Q )_d \geqslant 0.
\end{gather*}
Viewing these as forming a linear programming maximization problem with objective function $( \bm{e} Q )_0$, we are then to construct a vector $\bm{f} = ( f_0 , \dots , f_d )$ such that
\begin{equation}\label{LP}
	f_0 = 1, \quad f_1 = \dots = f_t = 0, \quad ( \bm{f} Q^{ \mathsf{T} })_1 = \dots = ( \bm{f} Q^{ \mathsf{T} })_{d-t} = 0,
\end{equation}
which turns out to give a feasible solution to the dual problem with objective value $( \bm{f} Q^{ \mathsf{T} })_0$, provided that $f_{t+1} \geqslant 0 , \dots , f_d \geqslant 0$.

Set $\w = \sum_{j=0}^d f_j E_j \v^*$.
Then
\begin{equation*}
	\w = \frac{ \langle \v, \v^* \rangle }{ || \v ||^2 } \sum_{j=0}^d f_j \sum_{i=0}^d Q_{ij} E_i^* \v = \frac{ \langle \v, \v^* \rangle }{ || \v ||^2 } \sum_{i=0}^d ( \bm{f} Q^{ \mathsf{T} })_i E_i^* \v.
\end{equation*}
Hence it follows that $\bm{f}$ satisfies \eqref{LP} if and only if $\w = \w_t$.
In particular, such a vector $\bm{f}$ is unique and is given by Theorem \ref{transition matrices to EKR basis} (ii).

We now give three examples.
First, suppose $\Phi$ is of \emph{dual Hahn} type \cite[Example 5.12]{Terwilliger2005DCC}, i.e.,
\begin{equation*}
	\theta_i=\theta_0+hi(i+1+s), \quad \theta_i^*=\theta_0^*+s^*i
\end{equation*}
for $0\leqslant i\leqslant d$, and
\begin{align*}
	\varphi_i=hs^*i(i-d-1)(i+r), \quad \phi_i=hs^*i(i-d-1)(i+r-s-d-1)
\end{align*}
for $1\leqslant i\leqslant d$, where $h , s^*$ are nonzero.
Then it follows that
\begin{align*}
	f_j =& \frac{ ( 1-j )_t ( j+s+2 )_t ( s-r+1)_j ( -1 )^{ j-1 } }{ ( t-r+s+1 ) ( s+2 )_t t! ( r+2 )_{ j-1 } } \\
	& \times \hypergeometricseries{ 3 }{ 2 }{ t-j+1 , t+j+s+2 , 1 }{ t+1 , t-r+s+2 }{ 1 }
\end{align*}
for $t+1 \leqslant j \leqslant d$, and
\begin{equation*}
	( \bm{f} Q^{ \mathsf{T} } )_0 = \frac{ ( -d-s-1 )_{ d-t } }{ ( r-s-d )_{ d-t } }.
\end{equation*}
If $\Gamma$ is the Johnson graph $J ( v , d )$ \cite[Section 9.1]{BCN1989B}, then $\Phi$ is of dual Hahn type with $r = d-v-1$, $s = -v-2$, and $s^* = - v ( v-1 ) / d ( v-d )$; cf.~\cite[pp.~191--192]{Terwilliger1993JACb}.
In this case, the vector $\bm{f}$ was essentially constructed by Wilson \cite{Wilson1984C} and was used to prove the original Erd\H{o}s--Ko--Rado theorem \cite{EKR1961QJMO} in full generality.

Suppose $\Phi$ is of \emph{Krawtchouk} type \cite[Example 5.13]{Terwilliger2005DCC}, i.e.,
\begin{equation*}
	\theta_i = \theta_0+si, \quad \theta_i^* = \theta_0^* + s^* i
\end{equation*}
for $0\leqslant i\leqslant d$, and
\begin{equation*}
	\varphi_i = r i ( i-d-1 ), \quad \phi_i = ( r - ss^* ) i ( i-d-1 )
\end{equation*}
for $1\leqslant i\leqslant d$, where $r , s , s^*$ are nonzero.
Then it follows that
\begin{equation*}
	f_j = \frac{ ( 1-j )_t }{ t! } \biggl( \frac{ r - s s^* }{ r } \biggr)^{ j-1 } \cdot \hypergeometricseries{ 2 }{ 1 }{ t-j+1 , 1 }{ t+1 }{ \frac{ s s^* }{ s s^* - r } }
\end{equation*}
for $t+1 \leqslant j \leqslant d$, and
\begin{equation*}
	( \bm{f} Q^{ \mathsf{T} } )_0 = \biggl( \frac{ s s^* }{ s s^* - r } \biggr)^{ d-t }.
\end{equation*}
If $\Gamma$ is the Hamming graph $H ( d , n )$ \cite[Section 9.2]{BCN1989B}, then $\Phi$ is of Krawtchouk type with $r = n ( n-1 )$ and $s = s^* = -n$; cf.~\cite[p.~195]{Terwilliger1993JACb}.
In this case, the vector $\bm{f}$ coincides (up to normalization) with the weight distribution of an \emph{MDS code} \cite[Chapter 11]{MS1977B}, i.e., a code attaining the Singleton bound.\footnote{In this regard, one may also wish to call $\{ \w_t \}_{ t=0 }^d$ an \emph{MDS} basis or a \emph{Singleton} basis.}

Finally, suppose $\Phi$ is of the most general $q$-\emph{Racah} type \cite[Example 5.3]{Terwilliger2005DCC}, i.e.,
\begin{align*}
	\theta_i=\theta_0+h(1-q^i)(1-sq^{i+1})q^{-i}, \quad \theta_i^*=\theta_0^*+h^*(1-q^i)(1-s^*q^{i+1})q^{-i}
\end{align*}
for $0\leqslant i\leqslant d$, and
\begin{align*}
	\varphi_i&=hh^*q^{1-2i}(1-q^i)(1-q^{i-d-1})(1-r_1q^i)(1-r_2q^i), \\
	\phi_i&= hh^*q^{1-2i}(1-q^i)(1-q^{i-d-1})(r_1-s^*q^i)(r_2-s^*q^i)/s^*
\end{align*}
for $1\leqslant i\leqslant d$, where $h, h^*, r_1, r_2 , s , s^* , q$ are nonzero and $r_1r_2=ss^*q^{d+1}$.
Then it follows that the $f_j$ are expressed as balanced $_4\phi_3$ series:
\begin{align*}
	f_j =& \frac{ s^{* j-1 } q^{ ( d+1 ) ( j-1 ) + t } ( q^{ 1-j } ; q )_t ( s q^{ j+2 } ; q )_t ( s q / r_1 ; q )_j ( s q / r_2 ; q )_j }{ ( 1 - s q^{ t+1 } / r_1 ) ( 1 - s q^{ t+1 } / r_2 ) ( q ; q )_t ( s q^2 ; q )_t ( r_1 q^2 ; q )_{ j-1 } ( r_2 q^2 ; q )_{ j-1 } } \\
	& \times \basichypergeometricseries{ 4 }{ 3 }{ q^{ t-j+1 } , s q^{ t+j+2 } , q^{ t-d-1 } / s^* , q }{ q^{ t+1 } , s q^{ t+2 } / r_1 , s q^{ t+2 } / r_2 }{ q;q }
\end{align*}
for $t+1 \leqslant j \leqslant d$, and
\begin{equation*}
	( \bm{f} Q^{ \mathsf{T} } )_0 = \frac{ ( s q^{ t+2 } ; q )_{ d-t } ( s^* q^2 ; q )_{ d-t } }{ r_1^{ d-t } q^{ d-t } ( s q^{ t+1 } / r_1 ; q )_{ d-t } ( s^* q / r_1 ; q )_{ d-t } }.
\end{equation*}

\nocite{*}
\bibliographystyle{amsplain}


\end{document}